\title{%
  Asymptotic Improvements on the Exact Matching Distance
       \newline for $2$-parameter Persistence%
  \thanks{This work is supported by the Austrian Science Fund (FWF) grant number P 33765-N.}
}
\author{%
  H\aa vard Bakke Bjerkevik,%
  \thanks{Graz University of Technology; \emph{currently at} SUNY Albany, 
          \texttt{hbjerkevik@albany.edu}}\,
  Michael Kerber,%
  \thanks{Graz University of Technology
          \texttt{kerber@tugraz.at}}
}
\theoremstyle{plain}
\newtheorem{theorem}{Theorem}
\newtheorem{lemma}[theorem]{Lemma}
\newtheorem{proposition}[theorem]{Proposition}
\newtheorem{corollary}[theorem]{Corollary}
\newtheorem{definition}[theorem]{Definition}
\DeclareMathOperator{\push}{push}
\DeclareMathOperator{\gr}{gr}
\DeclareMathOperator{\spn}{span}
\newcommand{\change}[1]{#1}
\newcommand{\I}{\mathbb I}
\newcommand{\R}{\mathbb R}
\newcommand{\bigO}{O}
\newcommand{\Ll}{\mathcal{L}}
\newcommand{\G}{\mathcal{G}}
\newcommand{\Rel}{\mathcal{R}}
\newcommand{\M}{\mathcal{M}}
\newcommand{\T}{\mathcal{T}}
\newcommand{\Hh}{\mathcal{H}}
\newcommand{\Ss}{\mathcal{S}}
\newcommand{\Pp}{\mathcal{P}}
\newcommand{\X}{\mathcal{X}}
\newcommand{\B}{\mathcal{B}}
\newcommand{\UT}{\bigcup \T}
\begin{document}
\maketitle

\begin{abstract}
In the field of topological data analysis, persistence modules are used to express geometrical features of data sets.
The matching distance $d_\M$ measures the difference between $2$-parameter persistence modules by taking the maximum bottleneck distance between $1$-parameter slices of the modules.
The previous best algorithm to compute $d_\M$ exactly runs in $O(n^{8+\omega})$ time using $O(n^4)$ space, where $n$ is the number of generators and relations of the modules and $\omega$ is the matrix multiplication constant.
We improve significantly on this by describing an algorithm with expected running time $O(n^5 \log^3 n)$ and using $O(n^2)$ space.
We first solve the decision problem $d_\M\leq \lambda$ for a constant $\lambda$ in $O(n^5\log n)$ time by traversing a line arrangement in the dual plane, where each point represents a slice.
Then we lift the line arrangement to a plane arrangement in $\R^3$ whose vertices represent possible values for $d_\M$, and use a randomized incremental method to search through the vertices and find $d_\M$.
The expected running time of this algorithm is $\bigO((n^4+T(n))\log^2 n)$, where $T(n)$ is an upper bound for the complexity of deciding if $d_\M\leq \lambda$.
Moreover, we show how to compute the matching distance using only linear space, to the price of a much worse time complexity.
\end{abstract}

\section{Introduction}
\emph{Persistent homology}~\cite{eh-computational,elz-topological,oudot-book} is a method to summarize the topological
properties of a data set across different scales. The last 20 years
have witnessed numerous applications of persistent homology in various
scientific fields, including neuroscience~\cite{bmmps-persistent,reimann-cliques,rbd-decoding}, material science~\cite{hiraoka-hierarchical,lee-high}, cosmology~\cite{cosmo2,cosmo1}, and many others,
shaping the field of \emph{topological data analysis}~\cite{carlsson-survey}. Part of the success
of this field is the presence of fast algorithms for the most important
sub-tasks, namely computing a discrete summary of the topology of the data
(the so-called \emph{barcode}) via Gaussian elimination of a sparse matrix,
and computing the distance of two such barcodes (the so-called \emph{bottleneck distance}) as a minimal cost matching via the Hopcroft-Karp algorithm.

The standard theory of persistent homology assumes a single scalar (real)
parameter to determine the scale. In many applications, however, it is natural
to alter more than one parameter: As a simple example, consider the
situation of a point cloud in Euclidean space, where one parameter controls
until what distance two points are considered as close, and a second
parameter determines until which local density a point is labeled as an outlier.
Every combination of these two parameters results in a shape approximating
the data, and studying the evolution of topological properties across the
$2$-parameter space of shapes leads to the theory of \emph{multi-parameter persistence}.
Unfortunately, already for two parameters, 
there is no direct generalization of the barcode~\cite{cz-theory},
and the most natural notion of distance between two $2$-parameter data sets,
the \emph{interleaving distance}, is NP-hard to approximate
for any factor smaller than $3$~\cite{bbk-computing}.

Despite these negative results, there are alternative notions of distance
which allow for a polynomial-time computation. Among them is the \emph{matching distance} ($d_\M$), the topic of this paper. The main idea is that when imposing 
a linear condition on the parameters, we effectively restrict the data
to one-parameter and can compute barcodes and bottleneck distance for this
restriction. The matching distance is then defined as the supremum of the bottleneck distance over all possible linear restrictions.

Kerber et al. showed recently~\cite{klo-exact} 
that the matching distance can be computed (exactly)
in polynomial time, more precisely, with an asymptotic time complexity of $O(n^{8+\omega})$, where $n$ is the size of the input and $\omega$ is the matrix
multiplication constant.

\paragraph{Contribution.}
We describe a randomized algorithm that computes the exact matching distance
in $\tilde{O}(n^5)$ time, where $\tilde{O}$ means that we ignore logarithmic factors in $n$. We achieve this substantial improvement in complexity through
a combination of several techniques from computational geometry,
computational topology, and graph matchings.
We only list the major ideas and refine them in Section~\ref{sec:detailed_overview} where we outline the approach in more detail.

\begin{itemize}
\item We focus on the decision question of whether $d_\M\leq\lambda$ or $d_\M>\lambda$ for a fixed $\lambda\in\R$. We show that this
question can be addressed using a line arrangement with $O(n^2)$ lines,
substantially reducing the size from the $O(n^4)$ lines needed
in the previous approach by Kerber et al.~\cite{klo-exact}.

\item We traverse the line arrangement and maintain the barcodes and the bottleneck matching between them during this traversal. Using the vineyard algorithm
by Cohen-Steiner et al.~\cite{cem-vines} and a result about computing bottleneck matchings from \cite{kmn-geometry}, this allows us to spend only
linear time per visited arrangement cell (up to logarithmic factors),
resulting in an $\tilde{O}(n^5)$ algorithm for the decision problem.

\item We identify a set of $O(n^6)$ candidate values for $\lambda$
that is guaranteed to contain the exact value of the matching distance.
These values are obtained from triple intersections in a plane arrangement with $O(n^2)$ planes.

\item We show how to avoid considering all candidate values through
randomized incremental construction (e.g.~\cite[Ch.~4]{dutch}). More precisely, we maintain upper and lower bounds on the matching distance and cluster the candidates
in groups. We can quickly rule out clusters if they do not improve
the bounds, and by randomization, we expect only a logarithmic number
of improvements of the bounds.
\end{itemize}

A careful realization of our algorithmic ideas yields a quadratic space complexity both for the decision problem
and the randomized incremental construction. In particular, our construction avoids to fully store
an arrangement of $O(n^2)$ lines in memory and improves upon the previous approach by Kerber et al.~\cite{klo-exact}
that had a space complexity of $O(n^4)$.
We also show that a further improvement in space complexity to $O(n)$ is possible that, however, 
has an exponential running time.

\paragraph{Motivation and Related work.}
A common theme in topological data analysis is to consider
topological properties of a data set as a proxy and analyze a collection of
data sets in terms of their topology. 
A distance measure allows for standard data analysis tasks
on such proxies, such as clustering or visualization via multi-dimensional
scaling. 

For such a distance measure to be meaningful, it is required that it is \emph{stable}, that is, a slightly
perturbed version of a data sets is in small distance to the unperturbed data
set. At the same time, the distance should also be \emph{discriminative}, that is, distinguish between data sets with fundamental differences. (In the extreme
case, the distance measure that assigns $0$ to all pairs is maximally stable,
but also maximally non-discriminative.) Finally, for practical purposes,
the distance, or at least an approximation of it, should also be efficiently computable.

For the case of one parameter, the bottleneck and Wasserstein distances
satisfy all these properties. They are both stable under
certain tameness conditions~\cite{ceh-stability,cehm-lipschitz,st-wasserstein}
and efficient software is available for computation~\cite{kmn-geometry}.
Moreover, the bottleneck distance is \emph{universal}, meaning that it
is the most discriminative distance among all stable distances~\cite{bl-induced}.

In the multi-parameter setup, the situation is more complicated:
the interleaving distance is universal~\cite{lesnick-univ-interl},
but NP-hard to compute~\cite{bbk-computing}. There is also a multi-parameter
extension of the bottleneck distance based on matching indecomposable elements,
which is unstable and only efficiently computable in special cases~\cite{dx-computing}. The matching distance is an attractive alternative, because
it is stable~\cite{landi2018rank} and computable in polynomial time.
Moreover, since it compares two multi-filtered data sets based on the
\emph{rank invariant}~\cite{cz-theory}, we can expect good discriminative
properties in practice, even if no theoretical statement can be made at this point. At least, first practical applications of the matching distance have
been identified in shape analysis~\cite{italian,cerri2013betti}
and computational chemistry~\cite{keller2018persistent}; see also~\cite{han2020distributions} for a recent case study.

The matching distance allows for a rather straightforward approximation
algorithm~\cite{italian} through adaptive geometric subdivision with a
quad-tree data structure.
Recently, this approach has been refined~\cite{kn-efficient}
and the code has been released in the \textsc{Hera} library\footnote{\url{https://github.com/grey-narn/hera}}. These developments raised the question of 
practical relevance for our approach, given that a good approximation of the distance is usually sufficient in practical data. 
We believe that, besides the general interest in the theoretical complexity
of the problem, an exact algorithm might turn out to be faster in practice
than the proposed approximation scheme. The reason is that to achieve
a satisfying approximation factor, the subdivision approach has to descend
quite deep in the quad-tree, and while the adaptiveness of quad-trees avoids
a full subdivision in some instances, there are others where large areas
of the domain have to be fully subdivided to certify the result~-- see for instance Figure 11 in~\cite{kn-efficient}. An exact approach might be able
to treat such areas as one component and save time. At the same time, 
we emphasize that the approach presented in this paper is optimized
for asymptotic complexity; we do not claim that it will be efficient 
in practice as described.

\section{Extended overview of our approach}
\label{sec:detailed_overview}

We give an overview describing all the main ideas of the paper, 
but skipping formal proofs of correctness and only introducing terms and technical constructions through simplified examples.
Everything needed to prove our results will be introduced independently in later sections.

\paragraph{Bottleneck distance.}
The input data in topological data analysis often takes the form of a filtration $(X_s)_{s\geq 0}$ of simplicial complexes. 
In this overview section, we denote by $n$ the number of simplices of the simplicial complex~-- the meaning of $n$ will be slightly different
in the technical part of the paper, but this change does not affect any of the stated complexity bounds.
Applying homology in some degree to the complexes and inclusion maps, one gets a $1$-parameter \emph{persistence module}, to which one can associate a barcode, which is a multiset of intervals (i.e., \emph{bars}) of the form $[b,d)$. Intuitively, each bar represents a homological feature that is born at $b$ and dies at $d$.
Given two such persistence modules, one would like to say something about how similar they are, which is usually done by comparing their barcodes.

We consider two barcodes $B_1$ and $B_2$ as close if $B_1$ can be transformed
into $B_2$ by changing all endpoints of bars in $B_1$ by a small value.
More concretely, consider a real value $\lambda\geq 0$ and consider all
barcodes that arise from $B_1$ by shifting both endpoints of every bar
by at most $\lambda$ to the left or right. In particular, a bar of length $\leq 2\lambda$ can be transformed to an empty interval, and thus removed. Likewise,
any bar of length $\leq 2\lambda$ can be created by conceptually inserting 
an empty interval $[c,c)$ in $B_1$, and shifting the left endpoint to the left
and the right endpoint to the right.
The \emph{bottleneck distance} of $B_1$ and $B_2$ is the smallest $\lambda$
such that $B_2$ can be obtained from $B_1$ by such a $\lambda$-perturbation.

The computation of the bottleneck distance can be reduced to computing 
maximum-cardinality matchings in bipartite graphs. The idea is to set up
a graph for a fixed $\lambda\geq 0$ with the bars of $B_1$ and $B_2$
as vertex set and connect two bars if they can be transformed into each other
via a $\lambda$-perturbation. Moreover, the graph has to be extended
to account for the possibility of removing and creating bars in the perturbation~-- see~\cite[Ch.~VIII.4]{eh-computational} for details. Setting up this graph
$G^\lambda$ properly, we get that the bottleneck distance is at most $\lambda$
if and only if $G^\lambda$ has a perfect matching. 

While the complexity for finding a maximum-cardinality matching is $O(n^{2.5})$ using the Hopcroft-Karp algorithm \cite{hopkarp}, it can be reduced to $O(n^{1.5}\log n)$
here because of the metric structure of the graph~\cite{eik-geometry}.
The idea is that a bar $[a,b)$ can be represented as a point $(a,b)$ in the plane (resulting in the so-called~\emph{persistence diagram}), and two points
are connected in $G^\lambda$ if their $\ell_\infty$-distance is at most $\lambda$. Then, in order to find an augmenting path, we can avoid traversing
all edges of $G^\lambda$ (whose size can be quadratic), but instead use 
a range tree data structure~\cite[Ch. 5]{dutch} to find neighbors
in the graph.

\paragraph{$2$-parameter filtrations.}

In some important cases, our filtration is equipped with two parameters instead of one; in this case one has a family $(X_{a,b})_{a\geq 0,b\geq 0}$ of simplicial complexes with inclusion maps $X_{a,b}\to X_{a',b'}$ when $a\leq a'$ and $b\leq b'$.
We assume that all complexes $X_{a,b}$ are contained
in a finite simplicial complex $X$ and 
we let $n$ denote the number of simplices of $X$.
As in the $1$-parameter setup, we are interested in the evolution
of the homology of $X_{a,b}$
as the parameters range over their domain.
There is no summary for that evolution in the same way as the barcode for
a single parameter. However, we can explore the $2$-parameter filtration
by fixing a line $s$ of positive slope, called a \emph{slice}, 
and consider the $1$-parameter family
of simplicial complexes $(X_{a,b})$, where $(a,b)$ ranges over all points on $s$.
One example is the family $(X_{a,a})_{a\geq 0}$, obtained from
the slice $x=y$. Because slices have positive slope, the induced $1$-parameter
family is a filtration in the above case, so there is a well-defined
barcode for the $2$-parameter filtration with respect to every slice $s$.
Moreover, having two $2$-parameter filtrations and a slice $s$, 
there is a well-defined bottleneck distance between the slice barcodes.\footnote{Technically, the barcode and bottleneck distance depend on the parameterization of the line, but we postpone the discussion to the technical part.}
The \emph{matching distance} between two $2$-parameter filtrations
is the supremum of the bottleneck distance over all slices.

These definitions carry over to an arbitrary number of parameters, but the
algorithm that follows will not, so we restrict to two parameters throughout.

\paragraph{Duality and bottleneck function\change{.}}
We rephrase the definition of the matching distance through the
well-known point-line duality of computational geometry~\cite[Ch.~8.2]{dutch}.
We assign for each line $y=ax+b$ with slope $a>0$ a \emph{dual point} $(a,b)\in \R_{>0}\times\R$.
A point $p=(p_x,p_y)\in \R^2$ dualizes to a line $y=-p_xx+p_y$, and the points on that dual
line are the duals of all slices that pass through $p$.
We refer to the parameter plane as the \emph{primal plane} and the space in which
the dual points and lines are contained as the \emph{dual plane} throughout.
For every dual point $(a,b)$ (with $a>0$), we can assign
a real value which is defined by the bottleneck distance of the two
slice barcodes with respect to the slice $y=ax+b$. This induces a function
$B:\R_{>0}\times\R\to\R$, which we call the \emph{bottleneck function}. 
The matching distance is then just the supremum of $B$ over its domain.

\paragraph{Barcode \change{pairings.}}
The advantage of the described duality is that the space of slices
can be decomposed into parts where the bottleneck function is
simpler to analyze. As a first step, we define the \emph{barcode pairing} 
of a one-parameter filtration of simplicial complexes
as the collection of pairs $(\sigma,\tau)$ where $\sigma$ is a simplex giving rise to a homology class
and $\tau$ is the simplex eliminating it. 
In the same way we obtained slice barcodes, we also have a slice barcode
pairing, and by duality, we can talk about the barcode pairing
of a point in the dual plane.

The barcode can be obtained by just replacing $\sigma$ and $\tau$ 
by the points at which they appear in the filtration, which we will refer to as their \emph{grades}.
Importantly, while the barcode depends on the grades,
the barcode pairing only depends on the \emph{order} of the grades.
Hence, when moving in the dual plane, the corresponding barcode pairings
remain the same as long as no two simplices switch order.

As observed by Lesnick and Wright~\cite{lw-interactive}, there is a line
arrangement decomposing the dual plane into ($2$-dimensional) regions,
such that the barcode pairing is identical for all dual points in a region.
\change{(The \emph{barcode templates} in \cite{lw-interactive} are the same as barcode pairings, except that they are defined in terms of grades instead of simplices.)}
The line arrangement is not difficult to define: let us consider
two simplices $\sigma$ with grade $(a,b)$ and $\sigma'$ with grade $(a',b')$ 
and assume $a<a'$. If also $b<b'$, $\sigma$ will precede $\sigma'$ in the filtration
of any slice. If $b>b'$, the simplices $\sigma$ and $\sigma'$ will have the same
critical value exactly for those slices that pass through the \emph{join}
of the two grades, which is the point $(a',b)$. Hence, the dual line $y=-a'x+b$ 
will split the dual plane into two half-planes where the order of the two 
simplices is fixed. Doing this for all pairs of simplices
yield the line arrangement which we call $\Ll$. It consists of $O(n^2)$ lines and therefore
has $O(n^4)$ regions.

\paragraph{Fast computation of barcode \change{pairings.}}
Naively, computing the barcode pairing for each region can be done in
$O(n^{4+\omega})$ time by computing the arrangement $\Ll$ ($O(n^4)$~\cite[Thm. 8.6]{dutch}), picking one dual point per region ($O(n^4)$), generating the filtration along the primal
slice ($O(n)$ per region), \change{and computing the slice barcodes ($O(n^\omega)$ per region).}
%and computing their bottleneck distance ($O(n^{1.5}\log n)$ per region).

%As our first contribution, 
In \cite{lw-interactive}, it is shown
%we show 
how to reduce the complexity to $O(n^5)$ using fast updates
of persistence diagrams under transpositions as explained in~\cite{cem-vines}.
Specifically, knowing the decomposition $RU=\partial$ for a fixed order
of simplices of a simplicial complex and a transposition of two neigboring simplices in that order, we can update the decomposition $(R,U)$ in linear 
time in the size of the matrix. Crossing a line of the arrangement $\Ll$ corresponds exactly
to such a transposition. This suggests the following approach:
Construct a spanning tree for the dual graph of the arrangement 
(``dual'' in a different sense than above: the vertices of the graph are 
the regions, and the edges correspond to pairs of regions that share an edge).
The spanning tree defines a walk through $\Ll$ with $O(n^4)$ steps
that visits every region at least once (by doubling every edge 
and using an Eulerian tour). We compute the barcode pairing 
for the starting region and keep applying 
the transposition algorithm of~\cite{cem-vines}
to get the barcode pairing for the next region in linear time.
This yields an $O(n^5)$ time algorithm.

Though this method of computing the barcode at every slice is not new, using it to efficiently compute the matching distance is. Since our setting is rather different than that of \cite{lw-interactive}, which allows us some simplifications, we give the details of the computation in \cref{Sec:deciding}.

\paragraph{Computing the matching distance~-- previous version.}
We outline the algorithm of \cite{klo-exact}. 
They refine the arrangement $\Ll$ to an arrangement $\Ll'$
by adding $O(n^4)$ additional lines such that within each region, the 
bottleneck distance is determined by the distance of a fixed pair
of simplices. Therefore, the bottleneck function takes a simple
form in each region of $\Ll'$, and they show that in each region,
the bottleneck function is maximized at a boundary vertex (or potentially
as a limit for unbounded cells~-- we ignore this case for simplicity).
Hence, the algorithm simply evaluates the bottleneck function at all
vertices of the arrangement and takes the maximum. This yields
a running time of $O(n^{8+\omega})$ because the arrangement $\Ll'$
has $O(n^8)$ vertices. Using the more efficient barcode
computation from above, the complexity drops to $O(n^{9.5}\log n)$,
because we obtain the barcodes in $O(n^9)$ time and the cost
of computing the bottleneck distance at a fixed dual point is now dominated
by the (geometric) Hopcroft-Karp algorithm with complexity $O(n^{1.5}\log n)$.
Still, the large size of the arrangement $\Ll'$ prevents us from
further improvements with this approach.

\paragraph{Decision version of the matching distance.}
We first study the decision version of the problem: 
Given $\lambda\geq 0$, 
decide whether the matching distance is strictly greater than $\lambda$. 
We show how to answer this question in $\tilde{O}(n^5)$:
for that, we refine the arrangement $\Ll$
to an arrangement $\T_\lambda$ by adding 
$O(n^2)$ further lines. The crucial property is that within each region
of $\T_\lambda$, the bottleneck function $B$ is either $\leq\lambda$
or $>\lambda$ throughout. The construction is based on the observation that
if $B$ changes from $\leq\lambda$ to $>\lambda$ along some path in the dual
plane, this must happen at a dual point where the distance between two simplices along the primal slice changes from $\leq\lambda$
to $>\lambda$. However, we show that such a change can only happen
at dual points that lie on constantly many lines in dual space per pair of simplices~-- note
that some of these dual lines are vertical and therefore do not have
a corresponding point in primal space.

With the arrangement $\T_\lambda$, the decision version is directly solvable
in $O(n^{5.5}\log n)$ time: Pick one point
in the interior of each region, compute the bottleneck function,
and return if the function is $(>\lambda)$ for at least one point.

We further improve the complexity by the same idea of dynamic updates
as for the computation of barcode \change{pairings}. Recall that the question whether
the bottleneck distance between two barcodes is at most $\lambda$
reduces to the existence of a perfect matching in a graph $G^\lambda$.
Every slice/dual point gives rise to such a graph, and by our construction, 
$G^\lambda$ is fixed for every region of $\T_\lambda$.
Moreover, $G^\lambda$ changes in a very controlled way when crossing an
edge of $\T_\lambda$. The most important example is that $G^\lambda$
might gain or lose exactly one edge if we cross a line that changes the 
distance of two simplices from $\leq\lambda$ to $>\lambda$.

This suggests the following strategy: We fix a walk through
$\T_\lambda$ and maintain a perfect matching for $G^\lambda$. 
In many cases, we can either keep the matching or update it in constant time
when walking to the next region. The exception is that an edge of the matching is removed from $G^\lambda$, turning two vertices of $G^\lambda$ unmatched. 
Still, the remaining edges still form an ``almost perfect'' matching.
We simply look for an augmenting path in the new $G^\lambda$ to make the matching
perfect again (if no augmenting path exists, the matching distance is larger
than $\lambda$). In our geometric setting, computing one augmenting path
can be done in $O(n\log n)$ time. So, by starting with an almost-perfect
matching, we avoid the $O(\sqrt{n})$ rounds of the Hopcroft-Karp algorithm
which leads to the improvement in the complexity.

\paragraph{Candidate values.}
To turn the decision version into an algorithm to compute
the matching distance, we identify a candidate set $\lambda_1,\ldots,\lambda_m$
with $m=O(n^6)$ with the guarantee that the exact matching distance $\delta$
is one of the candidates. 

The cleanest way to describe the candidate set is by lifting all 
arrangements $\T_\lambda$ in $\R^3$ parameterized by $(a,b,\lambda)$
such that the intersection with $\lambda=\lambda_0$ yields $\T_{\lambda_0}$. It turns out that this lifting is a plane arrangement
with $O(n^2)$ planes.
We extend this arrangement by adding some other planes and call the resulting arrangement $\T$ (which still has $O(n^2)$ planes).

For every open $3$-dimensional region in $\T$, we have the property
that either $B(a,b)<\lambda$ for all $(a,b,\lambda)$ in the region, or $B(a,b)>\lambda$ for all $(a,b,\lambda)$ in the region.
Call a region \emph{green} in the former and \emph{red} in the latter case.
The matching distance is $\leq\lambda_0$ if and only if the 
the plane $\lambda=\lambda_0$ intersects no red region,
and the exact matching distance $\delta$ is determined by the lowest plane with
that property. It follows that $\delta$ is the $\lambda$-value of a vertex
of $\T$. Since such a vertex is the intersection of $3$ planes,
there are at most $O(n^6)$ such vertices.

Sorting the candidate set and using binary search, we can
then find the exact matching distance using $\log m=O(\log n)$ queries
to the decision algorithm. This results in a $O(n^6\log n)$ algorithm
for the matching distance.

\paragraph{Randomized construction.}
Our final ingredient is to avoid computing all candidates using randomization.
The idea is to iterate through the $\bigO(n^2)$ planes of $\T$ in random order and mantain an interval containing the matching distance. After each iteration, the interval contains the $\lambda$-value of just one vertex that lies on the planes we have considered so far.
After having run through all the planes of $\T$, we are left with just one possible value for the matching distance.

We prove that we can check if the interval needs to be updated in $\tilde\bigO(n^2)$, and if it does, we can update it in $\tilde\bigO(n^4+T(n))$, where $T(n)$ is an upper bound on the complexity of solving the decision problem $d_\M\leq \lambda$. Because of the random order, we only expect $O(\log n)$ updates,
leading to an expected runtime of $\tilde{O}(n^4+T(n))$ in total. As mentioned previously, we will show that $T(n)$ can be taken to be $\tilde\bigO(n^5)$, so we get a total expected runtime of $\tilde\bigO(n^5)$.
%We hope that our work can be used to prove an even better bound on the complexity of computing the matching distance. For instance, it follows from our results that to find an algorithm computing $d_\M$ in expected time $\tilde\bigO(n^4)$, it is enough to describe an algorithm deciding if $d_\M\leq \lambda$ in expected time $\tilde\bigO(n^4)$.

\paragraph{Space efficiency.}
The ideas described above yield the desired time complexity, but a space complexity of $O(n^4)$ when realized directly. 
For the decision algorithm, the reason is the construction of the walk through the arrangement which is of length $O(n^4)$.
We can avoid this global walk by replacing it with $O(n^2)$ many local walks, each along one of the lines of the arrangement,
which together cover all regions of the arrangement. These local walks only have a quadratic length and lead
to quadratic space for the decision version.

For the randomized incremental construction, every interval update internally requires a binary search on $O(n^4)$ values
which are determined by an arrangement of $O(n^2)$ lines on the considered plane.
Instead of storing the list of these values and determining their median, we can find an approximate median
with standard methods~\cite[Sec.~9.3]{cormen}, only storing $O(n^2)$ values at every time.

If only the space complexity is of interest, we can compute the matching distance with a conceptually much simpler approach:
we can iterate through the $O(n^6)$ candidate values using only logarithmic space besides storing the input 
because each candidate value is determined
by the intersection of three planes, and each plane is determined by two of the input values.
For each candidate, we apply the decision algorithm and remember the largest candidate
for which the decision algorithm returned true.

Also the decision algorithm can be implemented in linear space: the first major ingredient is that,
for the corresponding line arrangement $\T_\lambda$, we can compute a set of $O(n^4)$ points in the plane, 
using only logarithmic additional space, such that every region of the arrangement contains at least
one of these points. For each point, we compute the bottleneck function and we return true if and only if
the bottleneck function is at most $\lambda$ at every point considered.
The bottleneck function requires the computation of a barcode. The second major ingredient
is that we can compute barcode in linear space, using an unpublished result by Ulrich Bauer.
With that, the decision problem runs in linear space complexity. However, that barcode algorithm
runs in exponential time in the worst case.

\section{Persistence modules and the bottleneck distance}

%In this section, we explain what persistence modules are, 
\paragraph{Persistence modules}

We consider $\R$ and $\R^2$ as posets, $\R$ with the usual poset structure $\leq$, and $\R^2$ with the poset structure given by $(a,b)\leq (a',b')$ if $a\leq a'$ and $b\leq b'$. In what follows, we fix a finite field $F$ and let $P\in \{\R,\R^2\}$.
\begin{definition}
A persistence module $M$ is a collection of vector spaces $M_p$ over $F$ for each $p\in P$ and linear transformations $M_{p\to q}\colon M_p\to M_q$ for all $p\leq q$. We require that $M_{p\to p}$ is the identity and that
\[M_{q\to r}\circ M_{p\to q}= M_{p\to r}\]
whenever these are defined.

If $P=\R$, we call $M$ a $1$-parameter (persistence) module, and if $P=\R^2$, we call $M$ a $2$-parameter (persistence) module.
\end{definition}

We say that two modules $M$ and $N$ are \emph{isomorphic} and write $M\simeq N$ if there is a collection $\{f_p\}_{p\in P}$ of isomorphisms $M_p\to N_p$ such that for all $p\leq q$, $f_q\circ M_{p\to q} = N_{p\to q}\circ f_p$. One can check that the isomorphism relation is an equivalence relation.

The direct sum $M\oplus N$ of modules $M$ and $N$ is defined straightforwardly: $(M\oplus N)_p = M_p\oplus N_p$, and $(M\oplus N)_{p\to q}=M_{p\to q}\oplus N_{p\to q}$.

\begin{definition}
Let $I\subset \R$ be an interval. The interval module $\I^I$ is the $1$-parameter module defined by $\I^I_p=F$ if $p\in I$, and $\I^I_p=0$ if $p\notin I$. In addition, $\I^I_{p\to q}$ is the identity whenever $p,q\in I$.
\end{definition}
We will see below that the $1$-parameter modules we will consider are isomorphic to direct sums of interval modules. The associated intervals give rise to the barcodes mentioned in the introduction.

\paragraph{Presentations}

We will only consider persistence modules that allow a finite description. To be precise, we restrict ourselves to \emph{finitely presented} modules.

Let $P\in \{\R,\R^2\}$, and let $Q=([Q],\G,\Rel,\gr)$ be the collection of the following data:
\begin{itemize}
\item a finite $(m\times m')$-matrix $[Q]$ with entries in $F$,
\item tuples $\G =(g_1,\dots,g_m)$ and 
$\Rel =(r_1,\dots,r_{m'})$ indexing the rows and columns of $Q$, respectively,
\item a \emph{grade} $\gr(g)\in P$ for each $g\in \G$ and $\gr(r)\in P$ for each $r\in \Rel$,
\end{itemize}
such that for every $i$ and $j$ with the entry in the $i^\textrm{th}$ row and $j^\textrm{th}$ column of $[Q]$ nonzero, we have $\gr(g_i)\leq \gr(r_j)$. We call the elements of $\G$ \emph{generators} and those of $\Rel$ \emph{relations}. We often treat $\G$ and $\Rel$ as sets instead of tuples when the order of their elements is understood or irrelevant.

Let $e_1,\dots,e_m$ be the standard basis of $F^m$, $c_j$ the jth column of $[Q]$, and for $p\in P$, define
\begin{align*}
\G_p &=\{e_i\mid \gr(g_i)\leq p \},\\
\Rel_p &=\{c_j\mid \gr(r_j)\leq p \}.
\end{align*}
Then $Q$ induces a module $M^Q$ with
\[M_p^Q = \spn(\G_p)/\spn(\Rel_p)\]
and $M_{p\to q}^Q$ induced by the inclusion $\G_p\to \G_q$. We say that $Q$ is a \emph{presentation} of a module $M$ if $M$ is isomorphic to $M^Q$, and in this case, we say that $M$ is finitely presented. See \cref{Fig:presentation_of_module}.

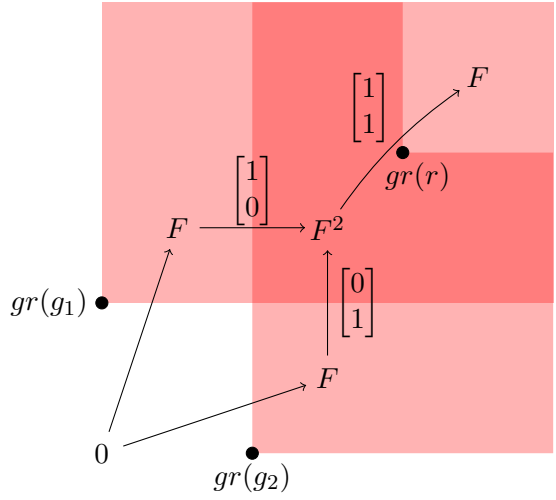
\begin{figure}
\centering
\begin{tikzpicture}
\fill[red, opacity=0.3]
(0,2) to (0,6) to (6,6) to (6,2) to (0,2);
\fill[red, opacity=0.3]
(2,0) to (2,6) to (4,6) to (4,4) to (6,4) to (6,0) to (2,0);
\node at (0,0){$0$};
\node at (1,3){$F$};
\node at (3,1){$F$};
\node at (3,3){$F^2$};
\node at (5,5){$F$};
\draw[->, shorten >=.3cm,shorten <=.3cm] (0,0) to (1,3);
\draw[->, shorten >=.3cm,shorten <=.3cm] (0,0) to (3,1);
\draw[->, shorten >=.3cm,shorten <=.3cm] (1,3) to (3,3);
\draw[->, shorten >=.3cm,shorten <=.3cm] (3,1) to (3,3);
\draw[->, shorten >=.3cm,shorten <=.3cm,out=55,in=215] (3,3) to (5,5);
\node at (2,3.5){$\begin{bmatrix}
1\\
0
\end{bmatrix}$};
\node at (3.4,2){$\begin{bmatrix}
0\\
1
\end{bmatrix}$};
\node at (3.55,4.6){$\begin{bmatrix}
1\\
1
\end{bmatrix}$};
\draw[color=black,fill=black] (0,2) circle (.08);
\node at (-.7,2){$gr(g_1)$};
\draw[color=black,fill=black] (2,0) circle (.08);
\node[below] at (2,0){$gr(g_2)$};
\draw[color=black,fill=black] (4,4) circle (.08);
\node[below] at (4.2,4){$gr(r)$};
\end{tikzpicture}
\caption{A $2$-parameter module presented by $([1;-1],\{g_1,g_2\},\{r\},\gr)$ for certain choices of grades. The shade of red shows the dimension, and some of the vector spaces $M_p$ and linear transformations $M_{p\to q}$ are shown.}
\label{Fig:presentation_of_module}
\end{figure}

We say that a presentation $\partial=([\partial],\G_\partial,\Rel_\partial,\gr)$ is a \emph{permutation of $Q$} if there are permutations $\sigma$ and $\tau$ such that $[\partial]_{i,j}=[Q]_{\sigma(i),\tau(j)}$ for all $i$ and $j$, and $\G_\partial= (g_{\sigma(1)},\dots,g_{\sigma(m)})$ and $\Rel_\partial= (r_{\tau(1)},\dots,r_{\tau(m')})$. That is, we permute the rows and columns of $[Q]$ and permute the indexing elements correspondingly. If $Q$ is a presentation of a module $M$, then so is $\partial$.

A presentation $([Q],\G,\Rel,\gr)$ of a $1$-parameter module is \emph{ordered} if the rows of $[Q]$ are nondecreasing with respect to $\gr(g)$ for $g\in \G$ and the same holds for the columns and $r\in \Rel$. Observe that for any presentation of a $1$-parameter module, there always exists a permutation of it that is ordered.

\paragraph{Input size}

We define the size of a presentation $Q$ as above as $m+m'$; that is, the number of generators and relations.
Now, since the presentation involves an $(m\times m')$-matrix, the actual space needed to store the presentation might be quadratic in $m+m'$.
But often the matrices in question will be sparse, with only $O(m+m')$ nonzero entries, which allows a description of the presentation in $O(m+m')$ space by simply listing the nonzero entries.
For instance, if we compute the $i^\text{th}$ homology of a filtration of simplicial complexes, the matrix will usually be a boundary matrix with at most $i+2$ nonzero entries in each column, as we only need to consider simplices of dimension $\leq i+1$.
We do not know any way to exploit this sparsity to get a more efficient algorithm, so even though our definition sometimes underestimates the actual input size significantly, we would not obtain stronger results if we changed the definition to, say, $m+m'$ plus the number of nonzero entries in $[Q]$.%\todo{Should we do more to argue that this is equivalent to the $n$ in the intro?}

\paragraph{From presentations to barcodes}

%To define the matching distance, we first need to define the bottleneck distance.
It is a classical result in persistence theory that $1$-parameter modules admit an invariant up to isomorphism called a barcode:
\begin{theorem}[{\cite[Thm.~1.1]{crawley2015decomposition}}]
\label{Thm:barcode_decomposition}
Let $M$ be a finitely presented $1$-parameter module. Then there is a unique multiset $B$ of intervals such that
\[M \simeq \bigoplus_{I\in B} \I^I.\]
\end{theorem}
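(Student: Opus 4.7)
The plan is to reduce the decomposition of a finitely presented $1$-parameter module to a finite-dimensional linear-algebra problem, then translate the resulting interval decomposition back to a module over all of $\R$, and finally establish uniqueness by a rank-counting argument.

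First, fix a presentation $Q=([Q],\G,\Rel,\gr)$ of $M$, and let $s_1<s_2<\cdots<s_N$ be the (finitely many) distinct real numbers that occur as grades of elements of $\G\cup \Rel$. For any $p\in \R$ with $s_i\leq p<s_{i+1}$ one has $\G_p=\G_{s_i}$ and $\Rel_p=\Rel_{s_i}$, so $M_p=M_{s_i}$ and the transition maps $M_{p\to q}$ are identities whenever $p,q$ lie in the same such half-open interval. Thus $M$ is completely determined up to isomorphism by the finite diagram
\[
M_{s_1}\xrightarrow{\phi_1} M_{s_2}\xrightarrow{\phi_2}\cdots\xrightarrow{\phi_{N-1}} M_{s_N},
\]
where $\phi_i:=M_{s_i\to s_{i+1}}$, together with the knowledge of which of the $s_i$ are generator grades and which are relation grades.

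Second, this finite diagram is a representation of the linear $A_N$-quiver with all arrows pointing right. A direct argument by induction on $N$ (choosing compatible bases: first split off the kernel of $\phi_{N-1}$, then recurse) shows that every such representation decomposes as a direct sum of ``discrete interval'' summands $\I_{\mathrm{fin}}^{[i,j]}$, which assign $F$ to indices $i,i{+}1,\dots,j$ with identity maps in between and $0$ elsewhere. Translating each such summand back to a persistence module over $\R$, it becomes an interval module $\I^I$ whose intersection with $\{s_1,\dots,s_N\}$ is $\{s_i,\dots,s_j\}$; whether the endpoints of $I$ are included or excluded is read off from the types of the grades (generator vs.\ relation) at the left end $s_i$ and at the index $j{+}1$, using the convention that generators open an interval and relations close it, so that the multiplicities match the presentation. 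This produces the desired decomposition $M\simeq \bigoplus_{I\in B}\I^I$.

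The main obstacle is uniqueness. I would handle it by observing that for any $p\leq q$ in $\R$,
\[
\dim_F\,\mathrm{im}\bigl(M_{p\to q}\bigr)\;=\;\#\{I\in B\mid [p,q]\subseteq I\},
\]
since each interval summand contributes $1$ to this rank precisely when it contains both $p$ and $q$. The multiset $B$ is therefore determined by the ranks of all transition maps of $M$, which depend only on $M$ itself and not on the chosen decomposition. A finite-difference computation on these ranks, evaluated at grades just inside and just outside the $s_i$'s, then recovers the multiplicity of every interval type in $B$, yielding uniqueness. The only subtlety is to handle open versus closed endpoints correctly; this is taken care of by choosing $p,q$ infinitesimally close to the critical grades $s_i$, which is legitimate since the ranks are locally constant away from the finite set $S=\{s_1,\dots,s_N\}$.
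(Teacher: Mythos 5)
The paper does not prove this theorem: it cites it as Theorem~1.1 of Crawley-Boevey's 2015 paper, which establishes interval decomposition for the much larger class of pointwise finite-dimensional persistence modules via a functorial-filtration argument. Your proposal instead gives a self-contained elementary proof restricted to the finitely presented case, which is all the paper uses (as the paper itself notes). The overall strategy -- reduce to a finite diagram over the critical grades $s_1<\cdots<s_N$, decompose the resulting $A_N$-quiver representation into discrete interval summands, translate each back to a real interval, and prove uniqueness by a rank/inclusion-exclusion argument -- is a standard and correct route for this restricted case, and it buys elementariness at the cost of not covering the more general pointwise-finite-dimensional setting.

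Two small points of caution. First, the $A_N$ decomposition step ("first split off the kernel of $\phi_{N-1}$, then recurse") is under-specified: $\ker\phi_{N-1}$ does not naively give a subrepresentation of the whole diagram, so one has to be careful about how the complement propagates to earlier indices when choosing compatible bases. The result (Gabriel's theorem for type $A$) is of course true, but this sentence as written is not yet a proof, and you should either choose an explicit compatible-basis scheme or cite the standard result. Second, your remark that "whether the endpoints of $I$ are included or excluded is read off from the types of the grades" is slightly misleading. For a finitely presented module the vector spaces and maps are constant on each half-open interval $[s_i,s_{i+1})$, so a discrete summand supported on indices $i,\dots,j$ always yields a real interval of the form $[s_i,s_{j+1})$ (or $[s_i,\infty)$ if $j=N$); the generator/relation labeling is a consequence, not an input, of this translation. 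Neither issue is a fatal gap, but both deserve tightening. The uniqueness argument via $\dim\operatorname{im}(M_{p\to q})=\#\{I\in B\mid [p,q]\subseteq I\}$ together with finite differences near the $s_i$ is correct.
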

We call $B$ the barcode of $M$. Isomorphic modules have the same barcode. The original statement of the theorem is for a larger set of modules, but we will not need the stronger version. In our case, where the modules are finitely presented, all the intervals are of the form $[a,b)$ with $a\in \R$ and $b\in \R\cup \{\infty\}$. (Equivalently, one can represent the barcode as a multiset of points in $\R\times (\R\cup\{\infty\})$, with a point $(a,b)$ for each interval $[a,b)$ in the barcode. This alternative description is called a \emph{persistence diagram}, and is used instead of barcodes in some of our references.)

For reasons that will become clear, we need to understand how these barcodes are computed. We give a quick summary following \cite[Sec.~2; Computation]{cem-vines}. First, some terminology: A \emph{column operation} on a matrix is the addition of a multiple of the $i^\textrm{th}$ column to the $j^\textrm{th}$ column for some $i<j$, a \emph{pivot} is a bottommost nonzero element of a (nonzero) column, and \emph{reduced form} means that the pivots are all in different rows. In the standard algorithm to compute the barcode \cite{zomorodian2005computing}, one takes as input an ordered presentation $Q=([Q],\G,\Rel,\gr)$ of a $1$-parameter module $M$ and performs column operations on $[Q]$ until the matrix is in reduced form.

One can summarize this column reduction by $R=[Q] V$, or equivalently $RV^{-1}=[Q]$, where $R$ is the reduced matrix and $V$ is the upper-triangular matrix expressing the column operations we performed on $[Q]$ to get $R$.
We call $(R,V^{-1})$ an \emph{RU-decomposition of $Q$}, also in the case when $Q$ is unordered and/or a presentation of a $2$-parameter module. We consider the rows and columns of $R$ to be indexed by $\G$ and $\Rel$ and view this information as part of the RU-decomposition.

We define the \emph{barcode pairing} associated to $Q$ as follows: It contains each pair $(g,r)$ of a generator $g$ and a relation $r$ such that $\gr(g)\neq\gr(r)$ and the element in the row of $g$ and the column of $r$ in $R$ is a pivot, as well as a pair $(g,\infty)$ for each $g$ whose row in $R$ does not contain a pivot. We view $\infty$ as a dummy relation at infinity, and define $\gr(\infty)=\infty$.
The barcode of $M$ turns out to be the multiset of intervals $[\gr(g),\gr(r))$ for the pairs $(g,r)$ in the barcode pairing.

The RU-decomposition and barcode pairing are not invariants of $M$ (as any module has several presentations), but, as stated by \cref{Thm:barcode_decomposition}, the barcode is. 
If $m$ is the size of $Q$, then an RU-decomposition takes $O(m^2)$ memory, and the barcode pairing $O(m)$ memory.

\paragraph{The bottleneck distance}

We will define the bottleneck distance on $1$-parameter modules in terms of perfect matchings in graphs built on barcode pairings of the modules.
The bottleneck distance is usually defined in terms of barcodes, not barcode pairings, but there are technical reasons why we want to keep track of the pairs of generators and relations instead of throwing that information away and just consider the intervals in the barcode.
The end result is the same, as our definition is equivalent to the usual one.

Let $M$ and $N$ be $1$-parameter modules with barcode pairings $B$ and $B'$, respectively. Given $\lambda\in [0,\infty]$, define an undirected bipartite graph $G=G_{B,B'}^\lambda$ as follows: The vertex set of $G$ is $(B\cup \overline{B'}) \sqcup (B' \cup \overline B)$, where $\overline B$ has an element $\overline{(g,r)}$ for each $(g,r)\in B$, and $\overline{B'}$ is defined similarly. There is an edge between $(g,r)\in B$ and $(g',r')\in B'$ if
\begin{equation}
\label{Eq:grade_diff}
|\gr(g)-\gr(g')|\leq \lambda \quad \text{and} \quad |\gr(r)-\gr(r')|\leq \lambda.
\end{equation}
Let $U\subset B\sqcup B'$ be the set of $(g,r)$ such that
\begin{equation}
\label{Eq:grade_diff2}
|\gr(g)-\gr(r)|\leq 2\lambda.
\end{equation}
For each $(g,r)\in U$, there is an edge between $(g,r)$ and $\overline{(g,r)}$. Lastly, there is an edge between each $\overline{(g,r)}\in \overline B$ and $\overline{(g',r')}\in \overline{B'}$.

Observe that the isomorphism class of $G$ only depends on the barcodes of $M$ and $N$, so the existence of a perfect matching is independent of which barcode pairings we use. This justifies the following definition.
\begin{definition}
\label{Def:Bottleneck_distance}
Let $B$ and $B'$ be barcode pairings of $1$-parameter modules $M$ and $N$. Define the bottleneck distance between $M$ and $N$ as
\[d_\B(M,N) = \min\{\lambda\mid G_{B,B'}^\lambda \text{ allows a perfect matching}\}.\]
\end{definition}
The set we take the minimum of is never empty, as $G_{B,B'}^\infty$ is the complete bipartite graph. That the minimum is well-defined follows from the observation that if an edge is present in $G_{B,B'}^\lambda$ for all $\lambda\in (a,\infty]$, then it is also present in $G_{B,B'}^a$. This, in addition to the fact that as $\lambda$ increases, the vertex set of $G$ stays the same and no edges are removed, implies the following lemma:
\begin{lemma}
\label{Lem:perfect_match_iff_leq_lambda}
$d_\B(M,N)\leq \lambda$ holds if and only if $G_{B,B'}^\lambda$ allows a perfect matching.
\end{lemma}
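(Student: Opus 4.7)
The plan is to derive the lemma directly from the definition of $d_\B$ as a minimum, using only the monotonicity of $G_{B,B'}^\lambda$ in $\lambda$, which is essentially the observation already noted in the paragraph preceding the statement. I would split the argument into the two implications.

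For the direction ($\Leftarrow$): if $G_{B,B'}^\lambda$ admits a perfect matching, then $\lambda$ belongs to the set $\{\lambda'\mid G_{B,B'}^{\lambda'} \text{ allows a perfect matching}\}$ whose infimum is $d_\B(M,N)$, so $d_\B(M,N)\leq \lambda$. This direction is essentially a tautology, and the only thing to be checked is that in \cref{Def:Bottleneck_distance} the infimum is actually attained — but the paragraph following the definition already gives this.

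For the direction ($\Rightarrow$): the key fact to establish is the monotonicity of $G_{B,B'}^\lambda$ in $\lambda$. Concretely, I would observe that the vertex set does not depend on $\lambda$, and each of the three families of edges is defined by a closed inequality of the form $|\gr(x)-\gr(y)|\leq \lambda$ or $|\gr(g)-\gr(r)|\leq 2\lambda$; hence if $\lambda_1\leq \lambda_2$, then $G_{B,B'}^{\lambda_1}$ is a subgraph of $G_{B,B'}^{\lambda_2}$ on the same vertex set. Suppose now $d_\B(M,N)\leq \lambda$. Because the minimum in \cref{Def:Bottleneck_distance} is attained, there is some $\lambda^\star\leq \lambda$ and a perfect matching $\mu$ in $G_{B,B'}^{\lambda^\star}$. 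By the monotonicity just noted, every edge of $\mu$ is still an edge of $G_{B,B'}^{\lambda}$, and the vertex sets agree, so $\mu$ is also a perfect matching in $G_{B,B'}^{\lambda}$.

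There is no real obstacle: the only subtlety is to explicitly justify that the set over which the minimum is taken is closed from below at its infimum, which follows from the closedness of the defining inequalities and reuses the same argument already used to show the minimum is well-defined. I would either cite that sentence or repeat the one-line justification for self-containedness.
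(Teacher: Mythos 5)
Your proposal is correct and follows essentially the same route as the paper, which states the lemma as a direct consequence of the two observations immediately preceding it (the minimum in the definition of $d_\B$ is attained, and $G_{B,B'}^\lambda$ is monotone in $\lambda$ with a fixed vertex set). The only cosmetic inaccuracy is calling all three edge families ``defined by a closed inequality'' --- the $\overline B$-to-$\overline{B'}$ edges are present unconditionally --- but this does not affect the monotonicity argument or its conclusion.
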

We remark, leaving the proof to the reader, that the existence of a perfect matching in $G_{B,B'}^\lambda$ is equivalent to the existence of a matching in the subgraph induced by $B\sqcup B'$ covering (a superset of) $(B\sqcup B')\setminus U$. Intuitively, we are allowed to match intervals in the barcodes that are similar, and we do not have to match short (i.e. ``insignificant'') intervals.

\paragraph{The matching distance}

We define a slice as a line $s\subset \R^2$ with positive slope. For $p\in \R^2$, let $p_x$ and $p_y$ be the $x$- and $y$-coordinates of $p$, respectively; that is, $p=(p_x,p_y)$. 
\begin{definition}
Let $M$ be a $2$-parameter module and $s$ a slice with slope at most one. We define $M^s$ as the $1$-parameter module with $M^s_r=M_p$, where $p$ is the unique point on $s$ such that $p_y=r$, and $M^s_{p_y\to q_y}=M_{p\to q}$ for $p\leq q\in s$.

If $s$ has slope more than one, we define $M^s$ as the $1$-parameter module with $M^s_{p_x}=M_{p}$ for all $p\in s$, and morphisms defined by $M^s_{p_x\to q_x}=M_{p\to q}$ for $p\leq q\in s$.
\end{definition}
Thus, if the slope of $s$ is at most one, we can visualize $M^s$ as being the restriction of $M$ to $s$ projected to the $y$-axis. If the slope is more than one, we can visualize $M^s$ as being the restriction of $M$ to $s$ projected to the $x$-axis.

In other definitions of modules analogous to $M^s$ in the literature, one often encounters scaling by weights depending on a parametrization of $s$ or the Euclidean distance in $\R^2$.
The purpose of this rescaling is to ensure that moving forward by $\epsilon$ along $M^s$ corresponds to moving forward by at least $\epsilon$ in both the $x$- and the $y$-coordinate in $\R^2$.
For this paper, we prefer this simpler definition, both because we get this property directly (without going through a different scaling that creates the need for a rescaling), and because the simplicity of projecting to the $x$-/$y$-axis often makes reasoning about $M^s$ easier than with other definitions.

Though $M^s$ is defined in different ways in the literature, there is a standard definition of the matching distance, which is the same as (or equivalent to) ours:
\begin{definition}
We define the matching distance as
\[d_\M(M,N) = \sup_{\text{slice } s}d_\B(M^s,N^s)\]
for finitely presented $2$-parameter modules $M$ and $N$.
\end{definition}
In view of \cref{Lem:Induced_presentation} below, $M^s$ and $N^s$ are finitely presented, so $d_\B(M^s,N^s)$ is indeed well-defined.

We will find it convenient to avoid the case distinction between slope less than and greater than one. Let
\[d_\M^{\leq 1}(M,N) = \sup_{\text{slice } s \text{ with slope }\leq 1}d_\B(M^s,N^s);\]
$d_\M^{\geq 1}(M,N)$ is defined similarly. Clearly,
\[d_\M(M,N)=\max\left\{d_\M^{\leq 1}(M,N),d_\M^{\geq 1}(M,N)\right\},\]
so to compute $d_\M(M,N)$, it is sufficient to compute $d_\M^{\leq 1}(M,N)$ and $d_\M^{\geq 1}(M,N)$.
Since computing $d_\M^{\leq 1}(M,N)$ and $d_\M^{\geq 1}(M,N)$ are completely symmetric problems (to find $d_\M^{\geq 1}(M,N)$, simply switch $x$- and $y$-coordinates of the grades of all generators and relations and apply the algorithm for $d_\M^{\leq 1}$), we can assume without loss of generality that $d_\M^{\leq 1}(M,N)\geq d_\M^{\geq 1}(M,N)$ for the purposes of asymptotic complexity, so that $d_\M^{\leq 1}(M,N)=d_\M(M,N)$. We will use this assumption throughout the rest of the paper.

\section{Deciding if $d_\M\leq \lambda$}
\label{Sec:deciding}

Our goal is to compute $d_\M(M,N)$, but first we consider an easier problem: given $\lambda\geq 0$, decide if $d_\M(M,N)\leq \lambda$.
We explain how to do this in $\bigO(n^5\log n)$.
Here, we assume that the input is a finite presentation of each of $M$ and $N$, and $n$ is the sum of the sizes of these presentations, where we recall that the size of a presentation is defined as its number of generators and relations.
In Section~\ref{sec:deciding_d_M}, we show how to use this to compute $d_\M(M,N)$ exactly.

Fix $\lambda\in [0,\infty)$. Since we assume $d_\M^{\leq 1}(M,N)=d_\M(M,N)$, every slice in this section will be assumed to have slope at most one. To decide if $d_\M(M,N)\leq \lambda$ for modules $M$ and $N$, we need to decide if $d_\B(M^s,N^s)\leq \lambda$ for every slice $s$.
Since there are infinitely many slices, it is not obvious how to do this in finite time, let alone efficiently. To decide if $d_\B(M^s,N^s)\leq \lambda$ for a given slice $s$, the most obvious approach is to find presentations of $M^s$ and $N^s$, then barcode pairings $B$ and $B'$, construct the graph $G_{B,B'}^\lambda$, decide if it has a perfect matching and apply \cref{Lem:perfect_match_iff_leq_lambda}.
Our strategy will be to group the slices into a finite number of sets such that we only need to go through this procedure for one slice in every set. We begin by explaining how a presentation $Q$ of a $2$-parameter module $M$ induces a canonical presentation of $M^s$ for every slice~$s$.

\paragraph{Induced presentations} Let $p\in \R^2$, let $s$ be a slice, and let $q$ be the least point on $s$ that is greater than or equal to $p$. We define $\push_s p$ to be $q_y$.\footnote{NB: In the literature, the push is more often defined as a point on $s$ and not as a point in $\R$, as in our definition.} If $p$ is on or above $s$, then $\push_s p=p_y$; in particular, if $p\in s$ and $M$ is a $2$-parameter module, then $M_{\push_s p}^s=M_p$. If $p$ is below $s$ and $(p_x,r)\in s$, then $\push_s p=r$. See \cref{Fig:pushes}.
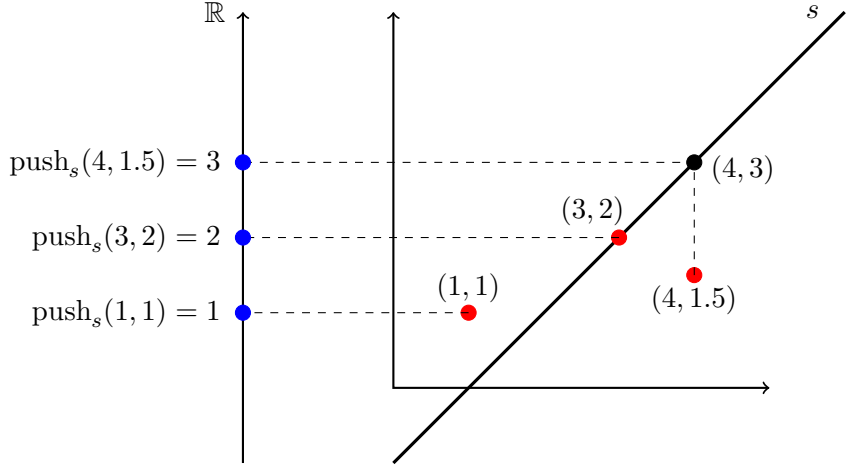
\begin{figure}
\centering
\begin{tikzpicture}
\draw[<->,thick] (5,0) to (0,0) to (0,5);
\draw[->,thick] (-2,-1) to (-2,5);
\node[left] at (-2.1,5){$\R$};
\draw[very thick] (0,-1) to (6,5);
\node[left] at (5.8,5){$s$};
%(3,2)
\draw[color=red,fill=red] (3,2) circle (.1);
\node at (2.65,2.35){$(3,2)$};
\draw[dashed] (3,2) to (-2,2);
\draw[color=blue,fill=blue] (-2,2) circle (.1);
\node at (-3.55,2){$\push_s(3,2)=2$};
%(4,1.5)
\draw[color=red,fill=red] (4,1.5) circle (.1);
\node[below] at (4,1.5){$(4,1.5)$};
\draw[color=black,fill=black] (4,3) circle (.1);
\node at (4.65,2.9){$(4,3)$};
\draw[dashed] (4,1.5) to (4,3) to (-2,3);
\draw[color=blue,fill=blue] (-2,3) circle (.1);
\node at (-3.7,3){$\push_s(4,1.5)=3$};
%(1,1)
\draw[color=red,fill=red] (1,1) circle (.1);
\node[above] at (1,1){$(1,1)$};
\draw[dashed] (1,1) to (-2,1);
\draw[color=blue,fill=blue] (-2,1) circle (.1);
\node at (-3.55,1){$\push_s(1,1)=1$};
\end{tikzpicture}
\caption{Points in red above, on and below a slice $s$ and the pushes of the points on the real line in blue.}
\label{Fig:pushes}
\end{figure}
Let $Q$ be a presentation of a $2$-parameter module with indexing sets $\G=(g_1,\dots, g_m)$ and $\Rel=(r_1,\dots, r_{m'})$. Let $\push_s(Q)$ be the $1$-parameter presentation with $[\push_s(Q)]=[Q]$, indexing sets \[
\push_s(\G)=(\push_s(g_1),\dots, \push_s(g_m)), \quad \push_s(\Rel)=(\push_s(r_1),\dots, \push_s(r_{m'}))
\]
(here $\push_s(g_i)$ and $\push_s(r_j)$ are to be understood simply as (labels for) the $i^\text{th}$ generator and $j^\text{th}$ relation of $\push_s(Q)$)
and
\[\gr(\push_s(g_i))=\push_s(\gr(g_i)), \quad \gr(\push_s(r_j))=\push_s(\gr(r_j))\]
for all $i$ and $j$. This is indeed a valid presentation, as
\[\gr(p)\leq \gr(q) \implies \gr(\push_s(p))\leq \gr(\push_s(q))\]
for all $p,q\in\R^2$.
\begin{lemma}
\label{Lem:Induced_presentation}
Let $s$ be a slice. If $Q=([Q],\G,\Rel,\gr)$ is a presentation of a $2$-parameter module $M$, then $\push_s(Q)$ is a presentation of $M^s$.
\end{lemma}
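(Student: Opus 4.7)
The plan is to show directly that the induced $1$-parameter presentation $\push_s(Q)$ represents $M^s$ vertex-by-vertex and map-by-map, using the assumed isomorphism $M \simeq M^Q$. Because in this section all slices have slope at most $1$, for each $r \in \R$ there is a unique point $p \in s$ with $p_y = r$, and we have $M^s_r = M_p$. I will construct an isomorphism $M^s_r \to M^{\push_s(Q)}_r$ compatible with the transition maps.

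The key technical step is the following comparison lemma: for every $q \in \R^2$ and every $p \in s$,
\[
\push_s(q) \leq p_y \iff q \leq p.
\]
For the $(\Leftarrow)$ direction, if $q \leq p$ then $p$ itself is a point of $s$ above $q$, so the least such point $q'$ satisfies $q' \leq p$, hence $\push_s(q) = q'_y \leq p_y$. For the $(\Rightarrow)$ direction, because $s$ has positive slope, two points of $s$ are comparable iff their $y$-coordinates are, so $q'_y \leq p_y$ implies $q' \leq p$, and then $q \leq q' \leq p$.

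Applying this to the grades of generators and relations, I get $\gr(\push_s(g_i)) \leq p_y$ iff $\gr(g_i) \leq p$ (and similarly for each $r_j$). Under the canonical identification of the standard basis of $F^m$ used to build both $M^Q$ and $M^{\push_s(Q)}$, this gives equalities $\push_s(\G)_{p_y} = \G_p$ and $\push_s(\Rel)_{p_y} = \Rel_p$ of subsets of $F^m$ and $F^{m'}$ respectively. Since the matrix itself is unchanged, $[\push_s(Q)] = [Q]$, the columns representing relations are literally identical, so
\[
M^{\push_s(Q)}_{p_y} = \spn(\push_s(\G)_{p_y})/\spn(\push_s(\Rel)_{p_y}) = \spn(\G_p)/\spn(\Rel_p) = M^Q_p.
\]
For $p \leq p'$ on $s$, the transition map on the left is induced by the inclusion $\push_s(\G)_{p_y} \hookrightarrow \push_s(\G)_{p'_y}$ and on the right by $\G_p \hookrightarrow \G_{p'}$; under the identification above these inclusions coincide, so the transition maps agree. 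Composing with the given isomorphism $M \simeq M^Q$ yields the desired isomorphism $M^s \simeq M^{\push_s(Q)}$.

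I do not foresee a serious obstacle; the argument is essentially bookkeeping once the comparison lemma is in place. The one subtle point is keeping the indexing data straight: one must verify that $\push_s$ truly leaves the matrix and the bijections between labels and basis vectors untouched, so that the equality of spans, not merely an isomorphism, holds at each grade. This is why I treat $\push_s(g_i)$ and $\push_s(r_j)$ as relabelings of the $i$th generator and $j$th relation rather than new objects.
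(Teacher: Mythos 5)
Your proof is correct and follows essentially the same route as the paper: establish the equivalence $\gr(h)\leq p \iff \push_s(\gr(h))\leq p_y$ for $p\in s$, use it to identify $M^{\push_s(Q)}_{p_y}$ with $M^Q_p$ as subquotients of the same ambient space, and observe that the transition maps are induced by the same inclusions. The only difference is that you spell out both directions of the comparison lemma, which the paper states without proof.
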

\begin{proof}
For $h\in \G\cup \Rel$ and $p\in s$, we have $\gr(h)\leq p$ if and only if $\push_s(\gr(h))\leq p_y$. We get
\begin{align*}
M^{\push_s(Q)}_{p_y} &= \spn\{e_i\mid \push_s(\gr(g_i))\leq p_y \}/\spn\{c_j\mid \push_s(\gr(r_j))\leq p_y \}\\
&= \spn\{e_i\mid \gr(g_i)\leq p \}/\spn\{c_j\mid \gr(r_j)\leq p \}\\
&= \spn(\G_p)/\spn(\Rel_p)\\
&= M^Q_p.
\end{align*}
The morphisms $M^{\push_s(Q)}_{p_y\to q_y}$ and $M^Q_{p\to q}$ are induced in the same canonical way, so they are equal. Since $M^s_{p_y}=M_p$, $M^s_{p_y\to q_y}=M_{p\to q}$ and $M\simeq M^Q$ all hold, we get $M^{\push_s(Q)}\simeq M^s$. Thus, $\push_s(Q)$ is a presentation of $M^s$.
\end{proof}
We say that $\push_s(Q)$ is the presentation of $M^s$ \emph{induced by} $Q$.

\paragraph{RU-decompositions and barcode pairings at slices}

Let $Q=([Q],\G,\Rel,\gr)$ be a presentation of $M$, and let $\partial= ([\partial],\G_\partial,\Rel_\partial,\gr)$ be a permutation of $Q$ such that $\push_s(\partial)$ is an ordered presentation of $M^s$.
Such a permutation always exists, as it is always possible to sort $\push_s(\G)$ and $\push_s(\Rel)$ in nondecreasing order.
If $(R,U)$ is an RU-decomposition of $\partial$, we say that it is an \emph{RU-decomposition of $Q$ at $s$}.
We say that the set $B$ of $(g,r)\in \G_\partial\times \Rel_\partial=\G\times \Rel$ obtained from the pivots of $R$ is a \emph{barcode pairing of $Q$ at $s$}, and let $\push_s(B)$ be the induced barcode pairing of $M^s$.

\paragraph{Computing the barcode at every slice}

The idea of defining RU-decompositions and barcode pairings of a fixed presentation at different slices is that we want to group many slices together and use the same RU-decompositions, barcode pairings and (as we will explain later) bipartite graph and perfect matching for all of them. We will use the following geometric representation exploiting point-line duality to make it easier to work with the set of slices.
\begin{definition}
Let $\omega = (0,1]\times \R$. We say that $(a,b)\in \omega$ represents the slice given by $y=ax+b$.
\end{definition}
We say that a point in $\omega$ and the slice it represents are \emph{dual} to each other. One can see that the set of slices through a point $p$ is represented by the line $y = -p_xx+p_y$. We call this the line \emph{dual to $p$}.
We use the notation $-^*$ for duality -- for instance, $p^*$ is the line given by $y = -p_xx+p_y$, and $(p^*)^*=p$. We say that $p$ lies in the \emph{primal plane}, and $p^*$ in the \emph{dual plane}.
We usually abuse notation and identify a slice with the point in $\omega$ that represents it, but we sometimes write $s^*$ if we want to emphasize that we treat it as a point in the dual plane and not as a line in the primal plane.
The vertical line $x=r$ contains exactly the (points representing) slices with slope $r$.
Thus, $\omega$ contains exactly the slices with slope at most one, which, as mentioned, are the slices we are interested in.
We remind the reader of some standard facts: If $p$ is a point and $s$ a slice, then $p\in s$ if and only if $s^* \in p^*$, and $p$ is above (below) $s$ if and only if $p^*$ is above (below) $s^*$.

For the remainder of the section, we fix two $2$-parameter modules $M$ and $N$ along with presentations $Q=([Q],\G,\Rel,\gr)$ and $Q'=([Q'],\G',\Rel',\gr)$, respectively. Let $\mathfrak G=\G\cup\Rel\cup\G'\cup\Rel'$, and let $n=|\mathfrak G|$.
Our next step is to define a line arrangement in $\omega$ and show that in a certain precise sense, in each region of the arrangement, RU-decompositions and thus barcode decompositions stay constant.
We also show that we can update the RU-decompositions efficiently as we cross a line in the arrangement. Later, as we traverse the arrangement and decide if $d_\B(M^s,N^s)$ at every slice $s$, this will come in handy.

For $p,p'\in \R^2$, let $p\vee p'$ denote the join of $p$ and $p'$, i.e., the least element that is greater than or equal to $p$ and $p'$.
\begin{definition}
Define the following set of lines in $\omega$:
\[\Ll = \{(\gr(h) \vee \gr(h'))^*\mid h,h'\in \mathfrak G\}.\]
\end{definition}
This is the \change{line arrangement $\Ll$} described in \cref{sec:detailed_overview}. There are $\bigO(n^2)$ lines in $\Ll$, so the line arrangement has $\bigO(n^4)$ open regions \cite[Thm.~8.4 (iii)]{dutch}.

For $h\in \mathfrak G$, let $h_x=\gr(h)_x$ and $h_y=\gr(h)_y$. When working with $\Ll$ and the line arrangement $\T_\lambda$ defined below, we run into the technical issue of coinciding lines.
In $\Ll$, this happens for instance if there are $h\neq h'\in \mathfrak G$ with $h_x=h'_x$, as in that case, we get $\gr(h)\vee \gr(h'')=\gr(h')\vee \gr(h'')$ for any $h''\in \mathfrak G$ with $h''_y\geq h_y,h'_y$.
Formally, we handle this by considering the line arrangements as multisets, so for instance we consider $(\gr(h)\vee \gr(h''))^*)$ and $(\gr(h')\vee \gr(h''))^*)$ as distincts elements of $\Ll$, even though they are equal when considered as lines.
This way, each line in $\Ll$ (and $\T_\lambda$) has a \emph{multiplicity} which may be greater than one.
%The only reason why this might be a problem for us is that
This matters to us because we will traverse these arrangements, and the complexity of our algorithm depends on how many times we have to cross a line.
When lines coincide, we have to count the number of times we cross them with multiplicity, as we need to perform certain operations for each coinciding line.
%To avoid distracting technical details, we will gloss over this issue and assume that the lines we consider have multiplicity one in what follows.\todo{Now we deal with multiplicity a little earlier.}
%Then, in the proof of \cref{Thm:runtime_leq_lambda}, we will explain how to handle lines with multiplicity greater than one.

The following lemma allows us to compute just one RU-decomposition and barcode pairing for each region of $\Ll$, as well as update these efficiently as we move from a region to a neighboring region.
\begin{lemma}
\label{Lem:barcodes_in_arrangement}
Let $S$ be an open region in $\Ll$, let $s\in S$, and let $(R,U)$ be an RU-decomposition of $Q$ at $s$ with associated barcode pairing $B$.
\begin{itemize}
\item[(i)] For any $s'\in S$, $(R,U)$ is an RU-decomposition of $Q$ at $s'$.
\item[(ii)] If $S'$ is another open region in $\Ll$ that is separated from $S$ by a line with multiplicity $k$, then for any $s'\in S$, an RU-decomposition of $Q$ at $s'$ with an associated barcode pairing can be obtained from $(R,U)$ and $B$ in $\bigO(kn)$ time and $O(n^2)$ memory.
\end{itemize}
The analogous statements hold if $Q$ is replaced by $Q'$.
\end{lemma}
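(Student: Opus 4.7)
The plan for (i) is to show that the combinatorial data underlying an RU-decomposition at $s$ is invariant as $s$ ranges over the open region $S$. The key claim is that for any $h, h' \in \mathfrak G$, the sign of $\push_s(\gr(h)) - \push_s(\gr(h'))$ is constant on each open region of $\Ll$. Writing $s$ as $y = ax+b$ and $p = \gr(h)$, one has $\push_s(p) = \max(p_y,\, a p_x + b)$, so the ``above/below'' status of each $h$ with respect to $s$ can change only when $s^* \in \gr(h)^*$, and $\gr(h)^* \in \Ll$ (taking $h' = h$ in the definition of $\Ll$). A short case analysis on these statuses then shows that, with them fixed, any order-flipping equality $\push_s(\gr(h)) = \push_s(\gr(h'))$ happens exactly when $s$ passes through the join $\gr(h) \vee \gr(h')$, which is another line of $\Ll$. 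Consequently, any permutation $\partial$ of $Q$ making $\push_s(\partial)$ ordered also makes $\push_{s'}(\partial)$ ordered for every $s' \in S$, and the same $(R,U)$ qualifies as an RU-decomposition of $\partial$ regardless of the slice in $S$.

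For (ii), the plan is to observe that crossing a single line of $\Ll$ produces at most one adjacent transposition in the row- or column-ordering of $[\partial]$. Suppose $(\gr(h) \vee \gr(h'))^*$ is the separating line with $h \neq h'$, as is permitted under the multiplicity-one assumption. Since an ordered presentation orders its rows (by grades in $\G$) and its columns (by grades in $\Rel$) independently, $\partial$ is affected only when $\{h, h'\} \subset \G$ or $\{h, h'\} \subset \Rel$; if either of $h, h'$ lies in $\G' \cup \Rel'$, or if one is in $\G$ and the other in $\Rel$, then $(R, U)$ may be reused verbatim. In the remaining cases, the analysis from (i) identifies $\{h, h'\}$ as the unique pair whose relative order flips, and the multiplicity-one assumption together with the continuity of $\push_s$ in the parameters of $s$ forces $h$ and $h'$ to be adjacent in the affected ordering at $s$ (a non-adjacent swap would demand additional flips that we have ruled out). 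The desired ordered presentation $\partial'$ at $s'$ is therefore obtained from $\partial$ by a single adjacent transposition of two rows or two columns. The vineyard algorithm of Cohen-Steiner, Edelsbrunner and Morozov~\cite{cem-vines} updates $(R, U)$ under precisely such a transposition in $O(n)$ arithmetic operations, yielding an RU-decomposition of $Q$ at $s'$. The argument for $Q'$ is identical after swapping the roles of the two presentations.

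The main obstacle I anticipate is the case analysis in (i): because $\push_s$ is piecewise linear in $s$, one must carefully enumerate the possible relative positions of $\gr(h)$ and $\gr(h')$ (comparable in the poset or not; above, below, or on $s$) and verify in each case that every sign flip of $\push_s(\gr(h)) - \push_s(\gr(h'))$ is witnessed by $s$ crossing one of $\gr(h)^*$, $\gr(h')^*$, or $(\gr(h) \vee \gr(h'))^*$, all of which lie in $\Ll$. Once that bookkeeping is completed, the rest of the lemma, including the adjacency of the transposition in (ii) and the invocation of the vineyard update, is essentially mechanical.
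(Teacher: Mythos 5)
Your proof takes essentially the same route as the paper's: both establish (i) by showing that the total preorder on $\mathfrak G$ induced by $\push_s$ is constant within each open region of $\Ll$, via a case analysis on the positions of $\gr(h)$, $\gr(h')$ relative to $s$ and their join, and both establish (ii) by observing that crossing a single line of $\Ll$ changes the relation between only the one pair $\{h,h'\}$, so at most one adjacent transposition is needed, for which the vineyard update of Cohen-Steiner, Edelsbrunner and Morozov gives the $O(n)$ bound. Your explicit remarks on why the transposition must be adjacent (transitivity would otherwise force additional relation flips, contradicting the single-line assumption) and on why a line involving one element from $\G\cup\Rel$ and one from $\G'\cup\Rel'$ leaves $(R,U)$ untouched are useful clarifications that the paper handles only implicitly, but they do not change the substance of the argument.
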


\begin{proof}
(i): Any slice $s$ induces a total preorder $\leq_s$ on $\mathfrak G$ by letting $h\leq_s h'$ if $\push_s(\gr(h))\leq \push_s(\gr(h'))$. If (the points represented by) $s$ and $s'$ are both in the same open half-plane defined by $(h\vee h')^*$, then $h \leq_s h'$ if and only if $h \leq_{s'} h'$: If $h_x<h'_x$ and $h_y>h'_y$, then $h<_s h'$ if and only if $s$ is above $h\vee h'$, and $h>_s h'$ if and only if $s$ is below $h\vee h'$. These correspond to $s^*$ lying above $(h\vee h')^*$ and below $(h\vee h')^*$, respectively. We leave the other cases to the reader.

This means that $\leq_s$ only depends on where $s$ is in relation to each line in $\Ll$, i.e., whether it is on, above or below it. Thus, the preorder is constant in $S$. But if $\partial$ is a permutation of $Q$, whether $\push_{s'}(\partial)$ is an ordered presentation of $M^{s'}$ depends only on $\leq_{s'}$. Therefore, $(R,U)$ is an RU-decomposition of $Q$ at $s$ if and only if it is an RU-decomposition of $Q$ at $s'$.

(ii): Suppose $k=1$ and the only line in $\Ll$ separating $S$ and $S'$ is $(\gr(h)\vee \gr(h'))^*$, and let $s\in S$ and $s'\in S'$. We have just shown that $j\leq_s j'$ and $j\leq_{s'} j'$ are equivalent for all $\{j,j'\}\neq \{h,h'\}$. Thus, only the relations between $h$ and $h'$ can change between the two regions. Suppose $\partial$ is a permutation of $Q$ and $\push_s(\partial)$ is an ordered presentation of $M^s$.
In this case, $\push_{s'}(\partial)$ is not necessarily an ordered presentation of $M^{s'}$. But if it is not, we know that we can make the rows and columns of $[\push_{s'}(\partial)]$ nondecreasing by either switching two rows or two columns, namely those indexed by $h$ and $h'$. Suppose $\partial'$ is the permutation of $\partial$ (and thus of $Q$) we get by switching $h$ and $h'$, and their corresponding rows/columns.
By \cite[Sec.~3]{cem-vines}, we can find an RU-decomposition of $\partial'$ with an associated barcode pairing in $O(n)$, since we already have an RU-decomposition of $\partial$ with an associated barcode pairing and we only need to make a single transposition. Because $[\push_{s'}(\partial')]$ is nondecreasing, an RU-decomposition of $\partial'$ is by definition an RU-decomposition of $Q$ at $s'$.
Since $(R,U)$ only takes up $O(n^2)$ memory and the running time is $O(n)$, the memory needed is $O(n^2+n)=O(n^2)$.

If $k>1$, we simply perform the above algorithm for each line in an arbitrary order.
This requires $O(kn)$ time and $O(n^2)$ memory.

The proofs for $Q'$ are exactly the same.
\end{proof}
%Since we can keep track of how the barcode pairing changes as the RU-decomposition changes, \cref{Lem:barcodes_in_arrangement} allows us to efficiently update the barcode pairing as we move from a region to a neighboring region.

\paragraph{Deciding if $d_\B\leq \lambda$ at a slice}
We show how deciding if $d_\B(M^s,N^s)\leq \lambda$ for a fixed slice
$s$ boils down to checking if a single graph has a perfect matching.

%Now that we have dealt with the RU-decompositions, we want to refine the line arrangement $\Ll$ so that deciding if $d_\B(M^s,N^s)\leq \lambda$ holds for all slices $s$ in a given region boils down to checking if a single graph has a perfect matching. 

Let $B\subset \G\times \Rel$ and $B'\subset \G'\times \Rel'$ be barcode pairings at $s$ (so $\push_s(B)$ and $\push_s(B')$ are barcode pairings of $M^s$ and $N^s$).
Recall from \cref{Lem:perfect_match_iff_leq_lambda} that $d_\B(M^s,N^s)\leq \lambda$ if and only if there is a perfect matching in the graph $G_{C,C'}^\lambda$, where $C$ and $C'$ are arbitrary barcode pairings of $M^s$ and $N^s$;
in particular we can choose $C=\push_s(B)$ and $C'=\push_s(B')$. Define $G_{B,B'}^s$ (we write $G_{B,B'}^{s,\lambda}$ if we want to emphasize that it depends on $\lambda$) as being equal to
$G_{\push_s(B),\push_s(B')}^\lambda$, except that each vertex $(\push_s(g),\push_s(r))\in \push_s(B)\cup \push_s(B')$ is replaced by $(g,r)$, and $\overline{(\push_s(g),\push_s(r))}\in \overline{\push_s(B)}\cup \overline{\push_s(B')}$ by $\overline{(g,r)}$.
The graphs $G_{B,B'}^s$ and $G_{\push_s(B),\push_s(B')}^\lambda$ are clearly isomorphic, so one has a perfect matching if and only if the other has.
Thus, $d_\B(M^s,N^s)\leq \lambda$ if and only if there is a perfect matching in $G_{B,B'}^s$.

For $h,h'\in \mathfrak G$, define
\[d^s(h,h')=|\gr(\push_s(h))-\gr(\push_s(h'))|.\]
Observe that in $G_{B,B'}^s$, there is an edge between $(g,r)\in B$ and $(g',r')\in B'$ if and only if $d^s(g,g'),d^s(r,r')\leq \lambda$. Similarly, there is an edge between $(g,r)$ and $\overline{(g,r)}$ if and only if $d^s(g,r)\leq 2\lambda$.

\paragraph{A line arrangement to decide if $d_\B\leq\lambda$}
We now define a line arrangement $\T_\lambda$ of $O(n^2)$ lines. We first
require that $\Ll\subseteq\T_\lambda$, so that in each region
of $\T_\lambda$, any two slices have the same barcode pairings by \cref{Lem:barcodes_in_arrangement}.
 Moreover, we add two sets $\Pp_\lambda$ and $\Ss_\lambda$ of lines to $T_\lambda$ to ensure that within any region,
either $d_\B(M^s,N^s)\leq \lambda$ for every choice of $s$ in the region,
or $d_\B(M^s,N^s)> \lambda$ for every choice of $s$ in the region.

To achieve this, note that $d_\B(M^s,N^s)$ is equal to either $d^s(h,h')$, or $\frac{1}{2}d^s(h,h')$ for some $h,h'\in \mathfrak G$.
\change{(These two possibilities arise from \cref{Eq:grade_diff} and \cref{Eq:grade_diff2}, respectively.)}
%the latter case accounts
%for the possibility that a better matching is prevented by a pair $(g,r)\in U$ with $\gr(r)-\gr(g) = $}
%for the possibility that $d_\B$ is realized by a matching to the diagonal).
For $h$, $h'$ fixed, we want to separate the (dual) slices for which $d^s(h,h')\leq\lambda$ from (dual) slices where $d^s(h,h')>\lambda$. This poses the question of which slices yield $d^s(h,h')=\lambda$.

There are three cases to consider: $h$ and $h'$ can both be above $s$, $h$ can be above and $h'$ below (switch $h$ and $h'$ for the opposite case), or they can both be below $s$.
In the proof of \cref{Lem:equiv_h_h'} below, illustrated in \cref{Fig:point_line_duality}, we show that in the first case, $d^s(h,h')=\lambda$ is equivalent to $s$ passing through $(h_x,h'_y\pm\lambda)$, which implies $s\in \Pp_\lambda$, and in the second case, $d^s(h,h')=\lambda$ is equivalent to $s$ having slope $\frac{\lambda}{|h_x-h'_x|}$, which implies $s\in \Ss_\lambda$.
In the third case, $d^s(h,h')$ is simply $|h_y-h'_y|$ and thus does not depend on $s$, and it turns out that we do not need to add more lines to deal with this case.
Similar statements hold for $d^s(h,h')=2\lambda$.

\begin{definition}
\label{def:lambda_lines}
Define the following sets of lines in $\omega$ for $\lambda>0$:
\begin{align*}
\Pp_\lambda &= \{(h_x,h'_y+i\lambda)^*\mid h,h'\in \mathfrak G, i\in\{-2,-1,1,2\}\},\\
\Ss_\lambda &= \left\{\left\{\frac{i\lambda}{|h_x-h'_x|}\right\}\times \R\mid h,h'\in \mathfrak G, i\in\{1,2\}, |h_x-h'_x|\geq i\lambda \right\},\\
\T_\lambda&=\Pp_\lambda\cup \Ss_\lambda\cup \Ll,
\end{align*}
and the same for $\lambda=0$ except that $\Ss_0=\emptyset$.
%For $h,h'\in \mathfrak G$, let $\T_{\lambda}^{h,h'}$ be the set of all lines in $\T_\lambda$ involving $h$ and $h'$, i.e.,
%\begin{align*}
%\T_{\lambda}^{h,h'}&= \{(h_x,h'_y+i\lambda)^*\mid i\in\{-2,-1,1,2\}\}\\
%&\quad\cup\{(h'_x,h_y+i\lambda)^*\mid i\in\{-2,-1,1,2\}\}\\
%&\quad\cup\left\{(\gr(h)\vee\gr(h'))^*,\left\{\frac{\lambda}{|h_x-h'_x|}\right\}\times \R, \left\{\frac{2\lambda}{|h_x-h'_x|}\right\}\times \R \right\}
%\end{align*}
%if $h_x\neq h'_x$, and the same except the last two elements if $h_x= h'_x$.
\end{definition}
%
%Observe that $\T_{\lambda} = \bigcup_{h,h'\in\mathfrak G}\T_{\lambda}^{h,h'}$.
%\begin{definition}
For $h,h'\in \mathfrak G$, let $d^-(h,h')\colon \omega\to[0,\infty)$ be the function defined by $s\mapsto d^s(h,h')$.
%Define an equivalence relation $\sim_{h,h'}$ on $\omega$ by letting the equivalence classes be $d^-(h,h')^{-1}(I)$ for $I \in \{ [0,\lambda), \{\lambda\}, (\lambda,2\lambda), \{2\lambda\}, (2\lambda,\infty)\}$.
%\end{definition}
%Thus, $s\sim_{h,h'} s'$ if and only if $d^s(h,h')$ and $d^{s'}(h,h')$ are contained in the same of the sets $[0,\lambda)$, $\{\lambda\}$, $(\lambda,2\lambda)$, $\{2\lambda\}$, $(2\lambda,\infty)$.
\begin{lemma}
\label{Lem:equiv_h_h'}
Let $h,h'\in \mathfrak G$. If $s,s'\in \omega$ lie in the same open region of $\T_\lambda$, then $d^s(h,h')$ and $d^{s'}(h,h')$ are contained in the same of the sets $[0,\lambda]$, $(\lambda,2\lambda]$, $(2\lambda,\infty)$.
\end{lemma}
\begin{proof}
Fix $h$ and $h'$. It is straightforward to check that the function $\omega\to \R$ given by $r\mapsto\gr(\push_r(h))$ is continuous. Thus, also $d^-(h,h')$ is continuous.

Assume that $s$ and $s'$ lie in the same open region $S$ of $\T_\lambda$. We will show that if $\lambda\in d^-(h,h')(S)$, then $d^-(h,h')(S)=\{\lambda\}$, and if $2\lambda\in d^-(h,h')(S)$, then $d^-(h,h')(S)=\{2\lambda\}$. By connectivity of $S$ and continuity of $d^-(h,h')$, $d^-(h,h')(S)$ is connected, so this implies that $d^-(h,h')(S)$ is contained in one of the sets $[0,\lambda)$, $\{\lambda\}$, $(\lambda,2\lambda)$, $\{2\lambda\}$, $(2\lambda,\infty)$, which proves the lemma.

Suppose $d^r(h,h')=\lambda$ for some $r\in S$.
We will only consider this case; the argument for $d^r(h,h')=2\lambda$ is practically the same. First assume that $\gr(h)$ and $\gr(h')$ are above or on $r^*$.
(As the arguments that follow are heavy on geometry and point-line duality, we will be careful about differentiating between the point $r$ in $\omega$ and the slice $r^*$ in the primal plane.) Then $d^r(h,h')=|h_y-h'_y|$, so $|h_y-h'_y|=\lambda$.
In this case, $\gr(h)=(h_x,h'_y\pm\lambda)$ and $\gr(h')=(h'_x,h_y\pm\lambda)$, so $\gr(h)^*,\gr(h')^*\in \Pp_\lambda$. Since $\gr(h)$ and $\gr(h')$ are above or on $r^*$ in the primal plane, $r$ is below or on $\gr(h)^*$ and $\gr(h')^*$ in $\omega$.
Since $\gr(h)^*,\gr(h')^*\in \Pp_\lambda$, this implies that all elements of $S$ are below $\gr(h)^*$ and $\gr(h')^*$ in $\omega$, and thus the slices they represent lie below $\gr(h)$ and $\gr(h')$ in the primal plane. Thus, $d^{r'}(h,h')=|h_y-h'_y|=\lambda$ holds for all $r'\in S$.

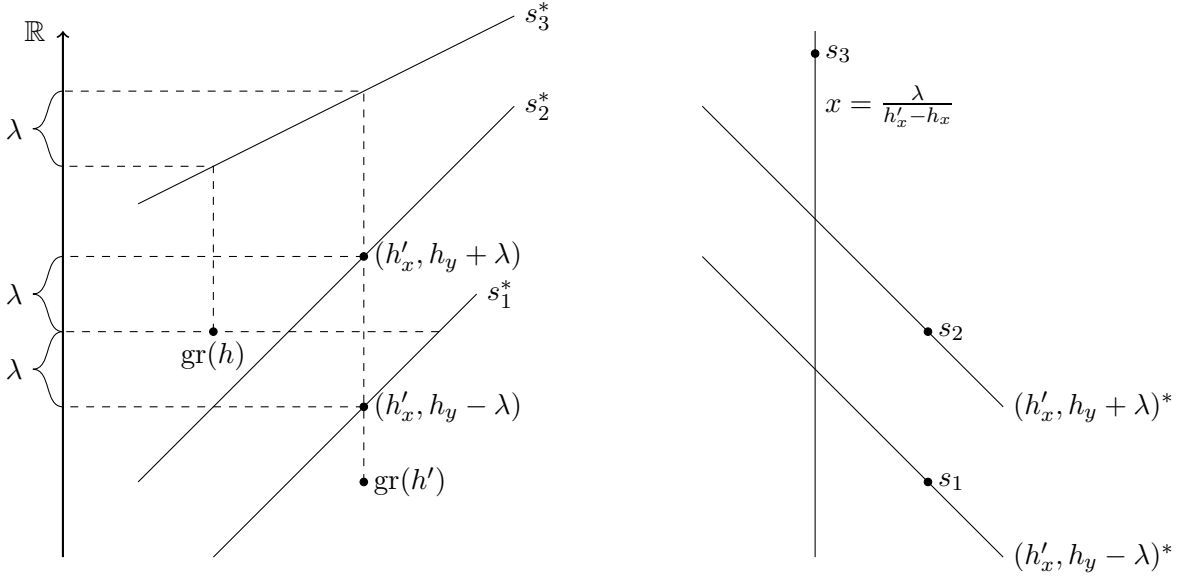
\begin{figure}
\centering
\begin{tikzpicture}
\begin{scope}[]
%x- and y-axes
%\draw[<->] (5,0) to (0,0) to (0,7);
%real line
\draw[->,thick] (-1,0) to (-1,7);
\node[left] at (-1.1,7){$\R$};
%h with dashed lines
\draw[fill=black] (1,3) circle (.05);
\node[below] at (1,3){$\gr(h)$};
\draw[dashed] (4,3) to (-1,3);
\draw[dashed] (1,3) to (1,5.2) to (-1,5.2);
%h' with dashed lines
\draw[fill=black] (3,1) circle (.05);
\node[right] at (3,1){$\gr(h')$};
\draw[dashed] (3,1) to (3,6.2) to (-1,6.2);
\draw[dashed] (3,2) to (-1,2);
\draw[dashed] (3,4) to (-1,4);
%other points
\draw[fill=black] (3,2) circle (.05);
\node[right] at (3,2){$(h'_x,h_y-\lambda)$};
\draw[fill=black] (3,4) circle (.05);
\node[right] at (3,4){$(h'_x,h_y+\lambda)$};
%lines
\draw (1,0) to (4.5,3.5);
\node[right] at (4.5,3.5){$s_1^*\in \Pp_\lambda$};
\draw (0,1) to (5,6);
\node[right] at (5,6){$s_2^*\in \Pp_\lambda$};
\draw (0,4.7) to (5,7.2);
\node[right] at (5,7.2){$s_3^*\in \Ss_\lambda$};
%braces with lambdas
\draw (-1,3) to[out=180,in=0] (-1.4,2.5) to[out=0,in=180] (-1,2);
\node[left] at (-1.4,2.5){$\lambda$};
\draw (-1,4) to[out=180,in=0] (-1.4,3.5) to[out=0,in=180] (-1,3);
\node[left] at (-1.4,3.5){$\lambda$};
\draw (-1,6.2) to[out=180,in=0] (-1.4,5.7) to[out=0,in=180] (-1,5.2);
\node[left] at (-1.4,5.7){$\lambda$};
\end{scope}
\begin{scope}[xshift=7.5cm,yshift=2cm]
%points
\draw[fill=black] (1.5,4.7) circle (.05);
\node[right] at (1.5,4.7){$s_3$};
\draw[fill=black] (3,1) circle (.05);
\node[right] at (3,1){$s_2$};
\draw[fill=black] (3,-1) circle (.05);
\node[right] at (3,-1){$s_1$};
%lines and labels
\draw (0,2) to (4,-2);
\node[right] at (4,-2){$(h'_x,h_y-\lambda)^*$};
\draw (0,4) to (4,0);
\node[right] at (4,0){$(h'_x,h_y+\lambda)^*$};
\draw (1.5,5) to (1.5,-2);
\node[right] at (1.5,4){$x=\frac{\lambda}{h'_x-h_x}$};
\end{scope}
%\draw[fill=black] (-2,2.05) circle (.05);
%\draw[dashed] (-2,0) to (2,0);
%\draw[fill=black] (-2,0) circle (.05);
\end{tikzpicture}
\caption{On the left: the grades of $h$ and $h'$ in the primal plane, and three slices dual to $s_1,s_2,s_3\in \omega$ with $d^{s_i}(h,h')=\lambda$. $s_1^*$ and $s_2^*$ pass through $(h'_x,h_y\pm\lambda)$, and $s_3^*$ has slope $\frac{\lambda}{h'_x-h_x}$. On the right: $s_1,s_2,s_3$ all lie on lines in $\T_\lambda$. The lines $(h'_x,h_y-\lambda)^*$ and $(h'_x,h_y+\lambda)^*$ are given by the equations $y=-h'_xx+(h_y-\lambda)$ and $y=-h'_xx+(h_y+\lambda)$, respectively.}
\label{Fig:point_line_duality}
\end{figure}

Next, assume that $\gr(h)$ and $\gr(h')$ are below or on $r^*$.
Then there are two points
\[(h_x,a),(h'_x,a\pm \lambda)\in r^*,\]
since $d^r(h,h')=\lambda$.
In this case, $r^*$ has slope $\frac{\lambda}{|h_x-h'_x|}$ and thus $r$ lies on the line $x=\frac{\lambda}{|h_x-h'_x|}$ in $\omega$, which is one of the lines in $\Ss_\lambda$.
(Consider $r=s_3$ in \cref{Fig:point_line_duality}.)
This is a contradiction, as we assumed that $r$ is in an open region $S$, which does not intersect any line in $\T_\lambda$.

Lastly, assume that $\gr(h)$ and $\gr(h')$ are on opposite sides of $r^*$; say $\gr(h)$ is above and $\gr(h')$ below. Then $\gr(\push_r(h)) = h_y$. This means that $\gr(\push_r(h')) = h_y\pm \lambda$, so there is a point $(h'_x,h_y\pm \lambda)\in r^*$. (Consider $r=s_1$ or $r=s_2$ in \cref{Fig:point_line_duality}.) Thus, $r\in (h'_x,h_y\pm \lambda)^*\in \Pp_\lambda$, which is again a contradiction.
\end{proof}

\begin{lemma}
\label{Lem:const_graph_in_region}
Let $S$ be an open region in $\T_\lambda$ and $s\in S$. Let $B\subset \G\times \Rel$ and $B'\subset \G'\times \Rel'$ be barcode pairings at $s\in S$. Then for any $s'\in S$, $G_{B,B'}^{s'}$ is well-defined, and $G_{B,B'}^s=G_{B,B'}^{s'}$.
\end{lemma}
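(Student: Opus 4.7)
The plan is to split the statement into two parts: (a) $B$ and $B'$ are also barcode pairings at $s'$ (so $G_{B,B'}^{s'}$ is well-defined, with the same vertex set as $G_{B,B'}^s$), and (b) the edge sets of $G_{B,B'}^{s}$ and $G_{B,B'}^{s'}$ coincide.

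For (a), I would observe that $\Ll \subseteq \T_\lambda$ by construction, so the open region $S$ of $\T_\lambda$ containing $s$ and $s'$ is contained in a single open region of $\Ll$. By \cref{Lem:barcodes_in_arrangement}(i), any RU-decomposition of $Q$ at $s$ is also an RU-decomposition of $Q$ at $s'$, and likewise for $Q'$. Since barcode pairings are read off directly from the pivots of the $R$-matrix together with the indexing sets $\G,\Rel,\G',\Rel'$ (which do not depend on the slice), the same $B$ and $B'$ qualify as barcode pairings at $s'$. In particular, the vertex set $(B\cup \overline{B'})\sqcup(B'\cup\overline{B})$ of $G_{B,B'}^{s'}$ equals that of $G_{B,B'}^s$.

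For (b), I would inspect the three types of edges in $G_{B,B'}^{s'}$. Edges between $(g,r)\in B$ and $(g',r')\in B'$ are controlled by the conditions $d^{s'}(g,g')\le\lambda$ and $d^{s'}(r,r')\le\lambda$; edges between $(g,r)$ and $\overline{(g,r)}$ are controlled by $d^{s'}(g,r)\le 2\lambda$; and the edges between $\overline B$ and $\overline{B'}$ depend on nothing at all. Since $\T_\lambda = \bigcup_{h,h'\in\mathfrak G}\T_\lambda^{h,h'}$, any open region $S$ of $\T_\lambda$ is contained in a single open region of each $\T_\lambda^{h,h'}$. By \cref{Lem:equiv_h_h'}, this gives $s\sim_{h,h'} s'$ for every pair $h,h'\in\mathfrak G$. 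Hence $d^s(h,h')$ and $d^{s'}(h,h')$ lie in the same element of the partition $\{[0,\lambda),\{\lambda\},(\lambda,2\lambda),\{2\lambda\},(2\lambda,\infty)\}$. In particular $d^s(h,h')\le\lambda$ iff $d^{s'}(h,h')\le\lambda$, and $d^s(h,h')\le 2\lambda$ iff $d^{s'}(h,h')\le 2\lambda$. Applying this to all relevant pairs $(g,g'),(r,r'),(g,r)$ shows that $G_{B,B'}^s$ and $G_{B,B'}^{s'}$ have identical edge sets.

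I do not expect any serious obstacle here: the work has been front-loaded into the two lemmas \cref{Lem:barcodes_in_arrangement,Lem:equiv_h_h'}, and the present statement is essentially the assembly of those facts together with the definition of $\T_\lambda$ as the union $\Ll\cup\bigcup_{h,h'}\T_\lambda^{h,h'}$. The only mild subtlety worth stating explicitly is that "barcode pairing at $s$" is a property of the permuted presentation together with its RU-decomposition, so the equality of $B$ (and $B'$) as barcode pairings at $s$ and $s'$ really does follow from the constancy of the RU-decomposition inside the region of $\Ll$ containing $S$.
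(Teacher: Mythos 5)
Your proof is correct and follows essentially the same route as the paper: part (a) invokes $\Ll\subseteq\T_\lambda$ together with \cref{Lem:barcodes_in_arrangement}(i), and part (b) invokes $\T_\lambda=\bigcup_{h,h'}\T_\lambda^{h,h'}$ together with \cref{Lem:equiv_h_h'} and the observation that the edge conditions depend only on inequalities of the form $d^r(h,h')\le\lambda$ or $d^r(h,h')\le 2\lambda$. No differences worth noting.
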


\begin{proof}
Since $\Ll\subset \T_\lambda$, $S$ is contained in an open region of $\Ll$. By \cref{Lem:barcodes_in_arrangement}\,(i), $B$ and $B'$ are then barcode pairings at $s'$, so $G_{B,B'}^{s'}$ is well-defined.

%Let $h,h'\in \mathfrak G$. Since $s,s'$ lie in the same open region of $\T_\lambda$, they lie in the same open region of $\T_\lambda^{h,h'}$.
%By \cref{Lem:equiv_h_h'}, we get $s\sim_{h,h'}s'$.
Whether there is an edge between two vertices of $G_{B,B'}^r$ depends only on whether inequalities of the form $d^r(h,h')\leq \lambda$ and $d^r(h,h')\leq 2\lambda$ hold, and by \cref{Lem:equiv_h_h'}, these inequalities hold for $r=s$ if and only if they hold for $r=s'$.
\end{proof}

With this in mind, we define $G_{B,B'}^S$ as $G_{B,B'}^s$ for any $s\in S$ and say that $B$ and $B'$ are barcode pairings \emph{in $S$}. Similarly, we say that an RU-decomposition at $s\in S$ is an RU-decomposition \emph{in $S$}, as justified by \cref{Lem:barcodes_in_arrangement}\,(i).
\begin{lemma}
\label{Lem:Neighboring_matching}
Let $R$ and $S$ be regions in $\T_\lambda$ that are separated by $k$ lines counted with multiplicity, and suppose we are given RU-decompositions with associated barcode pairings $B\subset \G\times \Rel$ and $B'\subset \G'\times \Rel'$ in $R$ and a perfect matching in $G_{B,B'}^R$. Then we can find RU-decompositions with associated barcode pairings $B_S\subset \G\times \Rel$ and $B_S'\subset \G'\times \Rel'$ in $S$ and either find a perfect matching in $G_{B_S,B_S'}^S$ or determine that a perfect matching does not exist in $\bigO(kn\log n)$ time and $O(n^2)$ memory.
\end{lemma}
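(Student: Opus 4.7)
The plan is to update the RU-decompositions, the barcode pairings, the bipartite graph, and the matching in stages, performing only a bounded number of augmenting-path computations. If the separating line lies in $\Ll$, Lemma~\ref{Lem:barcodes_in_arrangement}\,(ii) produces RU-decompositions of $Q$ and $Q'$ at $S$ from those at $R$ in $\bigO(n)$ time via a single vineyard transposition, and the corresponding barcode pairings $B_S$ and $B_S'$ then differ from $B$ and $B'$ by only $\bigO(1)$ pairs, since the update of \cite{cem-vines} changes $\bigO(1)$ pivot positions. If the separating line does not lie in $\Ll$, then $R$ and $S$ sit in the same open region of $\Ll$, and Lemma~\ref{Lem:barcodes_in_arrangement}\,(i) lets us reuse the RU-decompositions and set $B_S=B$, $B_S'=B'$.

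Next I would quantify how $G_{B_S,B_S'}^S$ differs from $G_{B,B'}^R$. If the separating line is in $\Pp_\lambda\cup\Ss_\lambda$, then the construction of $\T_\lambda^{h,h'}$ combined with the case analysis in the proof of Lemma~\ref{Lem:equiv_h_h'} shows that crossing it moves $d^s(h,h')$ past the threshold $\lambda$ or $2\lambda$ for exactly one pair $h,h'\in\mathfrak G$. Consequently only $\bigO(1)$ edges are added or removed: the edges between pairs in $B\cup B'$ containing $h$ or $h'$, and the edges between such pairs and their barred copies. If the separating line lies in $\Ll$, then in addition $\bigO(1)$ vertices of $G$ are replaced because a single transposition alters at most one pair in $B$ or $B'$; each new or deleted vertex is incident to at most $n$ edges, so $\bigO(n)$ edges change.

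To produce the new matching, observe that adding edges never destroys a perfect matching, so only deletions matter. In each case above, at most $\bigO(1)$ edges of the given perfect matching in $G_{B,B'}^R$ are invalidated (either because an edge is removed or because an endpoint is replaced), leaving $\bigO(1)$ unmatched vertices on each side. A perfect matching in $G_{B_S,B_S'}^S$ exists if and only if we can find vertex-disjoint augmenting paths between them. Since adjacency in $G$ is governed by $\ell_\infty$-proximity of grades, we can find a single augmenting path, or certify that none exists, in $\bigO(n\log n)$ using the geometric variant of Hopcroft--Karp from \cite{eik-geometry,kmn-geometry}. Only $\bigO(1)$ such searches are needed, giving the claimed $\bigO(n\log n)$ bound.

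The main technical obstacle is the bookkeeping in the second step: isolating the exact set of edges and vertices of $G$ that can change upon crossing each type of line in $\T_\lambda$. For lines in $\Pp_\lambda\cup\Ss_\lambda$ this is essentially encoded in $\T_\lambda^{h,h'}$, but for lines in $\Ll$ one must appeal to the vineyard update of \cite{cem-vines} to guarantee that a single row/column transposition disturbs only $\bigO(1)$ entries of the barcode pairing, and therefore only $\bigO(1)$ vertices of $G$ and $\bigO(1)$ edges of the matching. Once this is in place, the geometric augmenting-path primitive finishes the argument.
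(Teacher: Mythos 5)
Your proposal follows essentially the same approach as the paper: case-split on whether the separating line lies in $\Ll$ or in $\Pp_\lambda\cup\Ss_\lambda$, show that only $\bigO(1)$ vertices/edges of the graph and hence $\bigO(1)$ edges of the matching are affected, and then repair the matching with a constant number of geometric augmenting-path searches. The only minor imprecision is the remark that a transposition ``alters at most one pair in $B$ or $B'$''; the vineyard update can swap two pairs (replacing $(g,r),(g',r')$ by $(g',r),(g,r')$), but this is still $\bigO(1)$ and does not affect the bound.
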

\begin{proof}
We first assume $k=1$ and that the line $\ell$ separating $R$ and $S$ is in $\Ll$. Then we can update the RU-decompositions and thus $B$ and $B'$ in $\bigO(n)$ using \cref{Lem:barcodes_in_arrangement}\,(ii). By \cite[Fig. 4, left]{cem-vines}\footnote{The setup in \cite{cem-vines} is a little different than ours. In their framework, an element can act both as a generator and a relation. Since this is impossible for us, we can ignore the right part of Fig. 4.}, either $B$ and $B'$ do not change, or one of the following happens:
\begin{itemize}
\item a pair $(g,r)$ appears or disappears in $B$ or $B'$ (in this case, the RU-decomposition does not necessarily change, but $(g,r)$ gives rise to an empty interval in $R$ and a nonempty interval in $S$, or vice versa),
\item two pairs $(g,r)$ and $(g',r')$ are replaced by $(g',r)$ and $(g,r')$ in $B$ or $B'$.
\end{itemize}
In both cases, the graph $G_{B,B'}^R$ only has to be modified ``locally'' to get a graph $G_{B_S,B_S'}^S$: At most two vertices are added, removed or replaced, and only the edges adjacent to the modified vertices are changed. (Actually, in the second bullet point, the graphs are isomorphic. We will not need this fact, so we do not prove it.) The matching in $G_{B,B'}^R$ therefore gives us a matching in $G_{B_S,B_S'}^S$ covering all but at most four vertices.

Now assume that $\ell$ is in $\T_\lambda\setminus\Ll=\Pp_\lambda\cup \Ss_\lambda$. Then by \cref{Lem:barcodes_in_arrangement}\,(i), the RU-decompositions and barcode pairings are valid also in $S$, as $R$ and $S$ are contained in the same region of $\Ll$. The only relevant change crossing $\ell$ can make is that there might be two elements $h,h'\in \mathfrak G$ such that $d^r(h,h')\leq\lambda$ and $d^s(h,h')>\lambda$ or vice versa for $r\in R$ and $s\in S$, or the same statement holds for $2\lambda$ instead of $\lambda$. The only change this can make to the graph is that an edge between the elements in $B\cup B'$ involving $h$ and $h'$ has to be added or removed, or, in the case $(h,h')\in B\cup B'$, an edge between $(h,h')$ and $\overline{(h,h')}$ has to be added or removed. In either case, the matching in $G_{B,B'}^R$ induces a matching in $G_{B,B'}^S$ (which is well-defined, as $B$ and $B'$ are valid in $S$) covering all but at most two vertices.

Thus, we have reduced the problem to finding a perfect matching, or decide that a perfect matching does not exist, in a graph $G_{B_S,B_S'}^S$ where we already have a matching covering all but at most four vertices.
By \cite[Thm.~3.1]{kmn-geometry}, which builds on \cite{eik-geometry}, the bottleneck distance can be computed in $\bigO(n^{1.5}\log n)$. But a factor of $O(n^{0.5})$ comes from the number of times one has to augment the matching, and after each augmentation one has a matching with at least one more edge.
Since we only need to add a constant number of edges, we can replace this factor by a constant, so the complexity of constructing a perfect matching or determining that it does not exist is only $\bigO(n\log n)$.

For the case $k>1$, we can do everything as above for each line, except that we might not obtain a perfect matching at every intermediate step.
Therefore, we do not update the matching until we have crossed all the lines, at which point we have a matching covering all but at most $4k$ vertices in $G_{B_S,B_S'}^S$.
By the same argument as above, we can find a perfect matching in $G_{B_S,B_S'}^S$ or determine that such a matching does not exist in at most $\bigO(kn\log n)$.

Regarding memory, we have observed that RU-decompositions and barcode matchings can be stored in $O(n^2)$.
If $k=1$, updating these can only require $O(n^2+n)=O(n^2)$ memory, as the running time is $O(n)$ for such an update.
Doing $k>1$ such updates in a row does not change this bound for the memory.
The memory required for updating the matchings cannot be larger than $\bigO(n^{1.5}\log n)$, as we are only executing parts of an algorithm that that runs in $\bigO(n^{1.5}\log n)$ on an input size of $n$.
\end{proof}
\begin{theorem}
\label{Thm:runtime_leq_lambda}
For any $\lambda\in[0,\infty)$, we can decide if $d_\M(M,N)\leq \lambda$ in $\bigO(n^5 \log n)$ and memory $O(n^2)$.
\end{theorem}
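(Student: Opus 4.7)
The plan is to build the arrangement $\T_\lambda$, pick an initial region, compute from scratch RU-decompositions, barcode pairings, and a perfect matching (or a certificate that none exists) there, and then walk through every region of $\T_\lambda$ while maintaining this data, spending only $\bigO(n\log n)$ per crossing thanks to \cref{Lem:Neighboring_matching}. If every region admits a perfect matching in its graph, then by \cref{Lem:const_graph_in_region} and \cref{Lem:perfect_match_iff_leq_lambda} we have $d_\B(M^s,N^s)\leq\lambda$ for every slice $s$ whose dual point lies in the interior of some cell, and by continuity (or by inspecting the boundary points separately) this extends to all slices, so $d_\M(M,N)\leq\lambda$. Conversely, if some cell fails, we have a witness slice with $d_\B(M^s,N^s)>\lambda$, so $d_\M(M,N)>\lambda$.

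More concretely, first I would construct $\T_\lambda$; since it consists of $\bigO(n^2)$ lines, the arrangement has $\bigO(n^4)$ vertices, edges, and faces, and can be built in $\bigO(n^4)$ time using the standard incremental construction \cite[Thm.\ 8.6]{dutch}, together with the dual graph whose nodes are the open cells and whose edges connect cells sharing a one-dimensional boundary piece. I would then form a spanning tree of this dual graph, double each tree edge, and take an Eulerian tour; this produces a closed walk of length $\bigO(n^4)$ that visits every cell and crosses each traversal edge exactly once per visit. Pick the first cell $S_0$ of the walk, choose any $s_0\in S_0$, and sort $\push_{s_0}(\G\cup\Rel)$ and $\push_{s_0}(\G'\cup\Rel')$ to get ordered presentations $\push_{s_0}(\partial)$ and $\push_{s_0}(\partial')$. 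Compute RU-decompositions of $\partial$ and $\partial'$ by column reduction in $\bigO(n^\omega)$ time, extract the barcode pairings $B_0,B_0'$, build the graph $G_{B_0,B_0'}^{S_0}$, and compute a maximum bipartite matching in $\bigO(n^{1.5}\log n)$ via the geometric Hopcroft--Karp algorithm of \cite{kmn-geometry}; if this matching is not perfect we are already done.

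Along the walk, each crossing is handled by \cref{Lem:Neighboring_matching} in $\bigO(n\log n)$: when the crossed line lies in $\Ll$ we update the RU-decompositions and the barcode pairings via the vineyard transposition of \cite{cem-vines} and patch the graph locally; when the line lies in $\Pp_\lambda\cup\Ss_\lambda$ we only need to add or delete a constant number of edges of the graph. In both cases the stored perfect matching loses at most a constant number of edges, so a single augmenting-path search in the geometric bipartite graph restores perfection (or certifies that it is impossible) in $\bigO(n\log n)$. If at any crossing perfection cannot be restored we halt and report $d_\M(M,N)>\lambda$; if the walk finishes successfully we report $d_\M(M,N)\leq\lambda$. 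With $\bigO(n^4)$ crossings at $\bigO(n\log n)$ each, the traversal costs $\bigO(n^5\log n)$, dominating the $\bigO(n^4)$ arrangement construction and the $\bigO(n^\omega+n^{1.5}\log n)$ initialization.

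The main technical nuisance, flagged already in the paragraph preceding the theorem, is that lines in $\T_\lambda$ can coincide: two distinct pairs $(h,h')$ may contribute identical lines, in which case a single geometric crossing corresponds to several logical updates. I would deal with this by assigning each line a multiplicity equal to the number of pair-contributions producing it and, when the walk crosses such a line, performing the corresponding updates one pair at a time in an arbitrary fixed order, each update falling under one of the two cases of \cref{Lem:Neighboring_matching} and therefore costing $\bigO(n\log n)$. Since the total multiplicity is $\bigO(n^2)$ and the walk traverses $\bigO(n^4)$ geometric edges, the total number of logical updates is still $\bigO(n^4)$, preserving the $\bigO(n^5\log n)$ bound. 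A minor additional subtlety is what to do for slices whose dual points happen to lie on a line of $\T_\lambda$; since the bottleneck function $B$ is upper semicontinuous on the closed half-planes defined by the lines (an equality $d^s(h,h')=\lambda$ or $2\lambda$ can only help edges exist), a perfect matching in every open cell suffices to certify $d_\M\leq\lambda$, and the converse direction needs no care because a strict violation in any cell already proves $d_\M>\lambda$.
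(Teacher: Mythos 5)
Your proposal follows the paper's proof essentially verbatim: build $\T_\lambda$, take a spanning tree of the dual graph and walk it (Eulerian tour), initialize an RU-decomposition and perfect matching in one cell, cross $O(n^4)$ edges at $O(n\log n)$ each via \cref{Lem:Neighboring_matching}, and resolve coinciding lines by splitting them into single-multiplicity crossings (the paper formalizes this by perturbing to parallel copies, but the idea is the same). The only imprecision is the upper-semicontinuity aside, which does not account for the fact that on lines of $\Ll$ the barcode pairing itself (hence the vertex set of the graph, not just its edge set) may change; the continuity of $s\mapsto d_\B(M^s,N^s)$ that you also cite is exactly the argument the paper uses and is sufficient.
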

\begin{proof}
By \cite[Lemma 3]{klo-exact}, the function $f$ sending a slice $s$ to $d_\B(M^s,N^s)$ is continuous. Since the union $U$ of open regions of $\T_\lambda$ is dense in $\omega$, we have $f(U)\subset [0,\lambda]$ if and only if $f(\omega)\subset [0,\lambda]$. Thus, to find out if $d_\M\leq \lambda$, it is enough to check if $d_\B(M^s,N^s)\leq \lambda$ for all $s$ in open regions. By \cref{Lem:perfect_match_iff_leq_lambda} and \cref{Lem:const_graph_in_region}, for any open region $S$ and $s,s'\in S$, $d_\B(M^s,N^s)\leq \lambda$ is equivalent to $d_\B(M^{s'},N^{s'})\leq \lambda$.
Thus, it is enough to check if $d_\B(M^s,N^s)\leq \lambda$ for one slice $s$ in each open region.

%We construct a spanning tree $T$ of the dual graph of $\T_\lambda$; \todo{Have we discussed the size of $\T_\lambda$?} that is, the graph with a vertex for each open region and an edge between two open regions if they are separated by a line.
%We begin by computing RU-decompositions of $Q$ and $Q'$ by column operations in an open region $S$ of $\T_\lambda$, finding the associated barcode pairings $B$ and $B'$, constructing $G_{B,B'}^S$ and either finding a perfect matching or determining that it does not exist with the Hopcroft-Karp algorithm \cite{hopkarp}. All of this can be done in $\bigO(n^3)$. If there is no perfect matching, then $d_\B(M^s,N^s)>\lambda$ for all $s\in S$, so $d_\M(M,N)>\lambda$, and we are done.

We begin by listing the lines in $\T_\lambda$ in any order; let this list be $\ell_1,\dots,\ell_m$ ($O(n^2)$ time and memory).
Then we pick any region $S$ adjacent to $\ell_1$ (by which we mean that \change{$\ell_1$} intersects the closure of $S$) and compute RU-decompositions of $Q$ and $Q'$ by column operations in $S$, find the associated barcode pairings $B$ and $B'$, construct $G_{B,B'}^S$ and either find a perfect matching or determine that it does not exist with the Hopcroft-Karp algorithm \cite{hopkarp}.
All of this can be done in $\bigO(n^3)$ time and $O(n^2)$ memory.
If there is no perfect matching, then $d_\B(M^s,N^s)>\lambda$ for all $s\in S$, so $d_\M(M,N)>\lambda$, and we are done.
Assuming that we found a perfect matching, we have $d_\B(M^s,N^s)\leq\lambda$ for all $s\in S$.

Next, we start the first out of $m$ iterations of the following algorithm:
\change{At step $i\geq 1$,} we assume that we already have RU-decompositions, barcode pairings and a perfect matching at a region $R$ adjacent to $\ell_i$.
We sort the lines $\ell_1,\dots, \ell_{i-1}, \ell_{i+1},\dots,\ell_m$ by when they cross $\ell_i$, say, in lexicographic order.
Starting at $R$, we then traverse the $O(n^2)$ regions adjacent to $\ell_i$ clockwise around the line segment between the top and bottom vertex on $\ell_i$; see \cref{Fig:Arrangement_walk}.
Counted with multiplicity, we have to cross $O(n^2)$ lines.
By \cref{Lem:Neighboring_matching}, we can update the RU-decompositions, barcode pairings and perfect matching (or determine that it does not exist) in $\bigO(kn\log n)$ time as we cross a line with multiplicity $k$, which gives a running time for going through all the regions adjacent to $\ell_i$ of $O(n^3\log n)$.
While doing this, we will visit a region intersecting $\ell_{i+1}$ (unless $i=m$), and at this region, we store the RU-decompositions, etc. to be ready for iteration $i+1$ of the algorithm.
At any given point, we only have to store a constant number of RU-decompositions, barcode pairings and maximal matchings, which takes $O(n^2)$ memory.

As any region is adjacent to a line, we visit every region of $\T_\lambda$ in this algorithm.
If $d_\B(M^s,N^s)\leq\lambda$ for all $s\in R$ for each region $R$, then $d_\M(M,N)\leq \lambda$; otherwise, we stop once we have found a region in which the inequality fails and conclude that $d_\M(M,N)>\lambda$.
Thus, we have computed what we wanted.
The total running time is $O(n^2\cdot n^3\log n)=O(n^5\log n)$, and as explained above, the memory consumption is never larger than $O(n^2)$.

\begin{figure}
\centering
\begin{tikzpicture}[scale=.8]
\begin{scope}
\draw (0,0) to (5,1.5);
\node[left] at (0,0){$\ell_{i+1}$};
\draw (5,0) to (0,5);
\node[left] at (0,5){$\ell_i$};
\draw (1,-.5) to (4,5);
%\node[below] at (1,-.5){$3$};
\draw (0,3) to (5,4);
%\node[left] at (0,3.5){$1$};
\draw (0,1) to (2.86,5);
%Traverse
%\draw[->, dashed, shorten <=.1cm] (1.8,4.5) to (3.1,4.2);
\draw[->, dashed, shorten >=.1cm,shorten <=.1cm] (3,4.2) to (2.3,3);
\draw[->, dashed, shorten >=.2cm,shorten <=.1cm] (2.3,3) to (4,2.2);
\draw[->, dashed, shorten >=.1cm,shorten <=.1cm] (4,2.2) to (4.9,.8);
\draw[->, dashed, shorten >=.1cm,shorten <=.1cm] (4.9,.8) to (3.5,0.5);
\draw[->, dashed, shorten >=.1cm,shorten <=.1cm] (3.5,0.5) to (2.6,1.4);
\draw[->, dashed, shorten >=.1cm,shorten <=.1cm] (2.6,1.4) to (1.4,2.3);
\draw[->, dashed, shorten >=.1cm,shorten <=.1cm] (1.4,2.3) to (.5,2.8);
\draw[->, dashed, shorten >=.1cm,shorten <=.1cm] (.5,2.8) to (0.5,3.8);
\draw[->, dashed, shorten >=.1cm,shorten <=.1cm] (0.5,3.8) to (1.8,4.5);
%Disk, square
\draw[black,fill=black] (3,4.2) circle (.15);
\draw[black,fill=black] (3.85,2.05) rectangle (4.15,2.35);
\end{scope}
\end{tikzpicture}
\caption{Step $i$ of our traversal of $\T_\lambda$.
We visit every region adjacent to $\ell_i$ clockwise from the initial region marked with a disk, and when we are in a region adjacent to $\ell_{i+1}$, marked with a square, we store the information needed to perform step $i+1$ of the traversal.}
\label{Fig:Arrangement_walk}
\end{figure}
%Moving from one region across a line with multiplicity $k$ to a neighboring region in $\T_\lambda$ then corresponds to moving across $k$ lines with multiplicity one in $\T'_\lambda$. If we do not cross any edge in $\T_\lambda$ more than twice, we do not cross any edge in $\T'_\lambda$ more than twice, either. Thus, we only perform $\bigO(n^4)$ edge crossings in $\T'_\lambda$ as we walk through $\T_\lambda$. Each edge crossing in $\T'_\lambda$ costs $\bigO(n\log n)$ by \cref{Lem:Neighboring_matching}, as we only need to perform updates associated to a single line. Thus, the total running time is $\bigO(n^5\log n)$.
\end{proof}

\section{Computing $d_\M$}
\label{sec:deciding_d_M}

From now on, we use the shorthand $d_\M$ for $d_\M(M,N)$. 
As in the previous section, $n$ denotes the sum of generators
and relations of $M$ and $N$.
Before we explain the algorithm to compute $d_\M$, we deal with the special cases $d_\M=0$ and $d_\M=\infty$. Let $(T(n),M(n))$ be upper bounds for the time and space complexity of an algorithm to decide if $d_\M\leq \lambda$ for an arbitrary $\lambda$. (In the last section we proved that we can take $(T(n),M(n))$ to be $(\bigO(n^5\log n),\bigO(n^2))$.)
First, we check if $d_\M\leq 0$ in time $T(n)$ and space $M(n)$. If the answer is yes, we know that $d_\M=0$. To find out if $d_\M=\infty$, we check if $d_\B(M^s,N^s)=\infty$ for one slice $s$ in $\bigO(n^3)$.
These are equivalent by the lemma below.
%: One can check that $d^s(h,h')-d^{s'}(h,h')$ is bounded above by $\max\{|h_x-h'_x|,|h_y-h'_y|\}$ for any $s,s'$ in the same region of $\Ll$. One can then use \cref{Lem:barcodes_in_arrangement} (i) to prove that $|d_\B(M^s,N^s)-d_\B(M^{s'},N^{s'})|$ is bounded above by
%\[
%L\coloneqq\max_{h,h'\in \mathfrak G}\max\left\{|h_x-h'_x|,|h_y-h'_y|\right\}.
%\]
%Since $d_\B(M^s,N^s)$ varies continuously with $s$ in all of $\omega$ and there are only finitely many regions of $\Ll$, $d_\B(M^s,N^s)$ has a global (finite) upper bound for all $s\in \omega$ if it is finite for one such $s$.
Considering this, we will assume $d_\M\in (0,\infty)$.
\begin{lemma}
For any slice $s$, $d_\B(M^s,N^s)=\infty$ is equivalent to $d_\M(M,N)=\infty$.
\end{lemma}
\begin{proof}
By definition, $d_\B(M^s,N^s)=\infty$ implies $d_\M(M,N)=\infty$.
Suppose that $d_\M(M,N)=\infty$ and that there is a slice $s$ such that $d_\B(M^s,N^s)$ is finite.
Since the function sending a slice $s$ to $d_\B(M^s,N^s)$ is continuous, as noted above, this means that $d_\B(M^s,N^s)$ is finite for all $s$.
The only way $d_\M(M,N)$ can be infinite in this case is if there are slices $s$ such that $d_\B(M^s,N^s)$ is arbitrarily large, in particular larger than $K$, where $K$ is a constant such that $\gr(h)\in \left[-\frac{K}{2},\frac{K}{2}\right]^2$ for all $h\in \mathfrak G$.
In this case, if $B$ and $B'$ are barcode pairings at $s$, then $G_{B,B'}^{s,K}$ does not have a perfect matching, but $G_{B,B'}^{s,K'}$ does for $K'$ large enough.
But this is impossible, as one can check that for any $h,h'\in \mathfrak G$,
\[
|d^s(h,h')-d^{s'}(h,h')|\leq \max\left\{|h_x-h'_x|,|h_y-h'_y|\right\} \leq K,
\]
which means that $G_{B,B'}^{s,K}$ is already a complete bipartite graph and thus isomorphic to $G_{B,B'}^{s,K'}$.
\end{proof}

\paragraph{The plane arrangement $\T$}

We will now move up a dimension from $\omega$ and work with $\Omega \coloneqq \omega\times [0,\infty)$. Elements in $\Omega$ are of the form $(s,\lambda)$, where $s$ should be viewed as a slice and $\lambda$ as a potential value for $d_\M$. We now define a plane arrangement $\T= \Pp\cup \Ss\cup \Hh$ that can be viewed as the union of the line arrangements $\T_\lambda$ for all $\lambda$ with some extra planes added in.
More precisely, the intersection of the planes in $\Pp$ with $\R^2\times \{\lambda\}$ gives a set of lines that contains all the lines in $\Pp_\lambda$, assuming that we identify $\R^2\times \{\lambda\}$ with $\R^2$ in the obvious way.
We see from the definition of $\Pp$ below that this set also contains the lines in $\Ll$, as $(h_1\vee h_2)^*\in\Ll$ is of the form $(h_x,h'_y+i\lambda)^*$ for $i=0$ and some choice of $h,h'\in \{h_1,h_2\}$.
Similarly, one can get all the lines in $\Ss_\lambda$ by intersecting the planes in $\Ss$ with $\R^2\times \{\lambda\}$.
We also add two planes $S_0$ and $S_1$ that contain parts of the boundary of $\Omega$, and a set $\Hh$ of planes of the form $\R^2\times \{\lambda\}$ for some $\lambda$ that need to be treated separately.
We define these planes as subsets of $\R^3$. $\Omega$ is viewed as a subset of $\R^2\times \R\simeq \R^3$. 

\begin{definition}
\label{def:plane_arrangement}
We define the following sets of planes in $\Omega$. Lines of the form $p^*$ should be understood to be the extension of $p^*$ in $\omega$ to $\R^2$.
\[\Pp = \{P_{h,h',i}\mid h,h'\in \mathfrak G, i\in\{-2,-1,0,1,2\}\},\]
where
\[P_{h,h',i} = \bigcup_{\lambda\in\R}(h_x,h'_y+i\lambda)^*\times \{\lambda\};\]
\[\Ss = \{S_{h,h',i}\mid h,h'\in \mathfrak G, |h_x-h'_x|\neq 0, i\in\{1,2\}\}\cup \{S_0,S_1\},\]
where
\[S_{h,h',i} = \left\{((a,b),\lambda)\mid a = \frac{i\lambda}{|h_x-h'_x|}\right\},\]
\[S_i = \{i\}\times\R^2;\]
\[\Hh = \{\R^2\times \{\lambda\}\mid \exists (h,h')\in \mathfrak G \text{ s.t. } |h_y-h'_y| \in \{\lambda,2\lambda\} \}.\]
Finally, define $\T= \Pp\cup \Ss\cup \Hh$.
\end{definition}
Note that $|\T |=\bigO(n^2)$.
%The reader can verify that the intersection of the planes in $\Pp\cup \Ss$ with $\R^2\times \{\lambda\}$ gives a set of lines that contains all the lines in $\T_\lambda$, assuming that we identify $\R^2\times \{\lambda\}$ with $\R^2$ in the obvious way. In particular, $(h_1\vee h_2)^*\in\Ll$ is of the form $(h_x,h'_y+i\lambda)^*$ for $i=0$ and some choice of $h,h'\in \{h_1,h_2\}$.

It is clear that the sets in $\Ss$ and $\Hh$ are indeed planes, as they are given by linear equations. The same holds for the sets in $\Pp$, as $((a,b),\lambda)\in P_{h,h',i}$ is equivalent to $b=-h_x a+h'_y+i\lambda$.

For a set $\X$ of planes, we write $\bigcup \X = \bigcup_{H\in \X} H$. We call a connected component of the complement of $\UT$ an \emph{open cell}. 
Letting $h=h'$, we see that $\R^2\times\{0\}\in\Hh$. The union $\R^2\times\{0\}\cup S_0 \cup S_1$ contains the boundary of $\Omega$. Therefore, any open cell is either contained in $\Omega$ or contained in $\R^3\setminus\Omega$.

\paragraph{$d_\M$ is attained at a vertex of $\T$}

Define a function $f\colon\Omega\to\R$ by
\[(s,\lambda) \mapsto \lambda-d_\B(M^s,N^s)
.\]
Thus, $d_\B(M^s,N^s)\leq \lambda$ if and only if $f(s,\lambda)\geq 0$.

\begin{proposition}
$f$ is continuous.
\end{proposition}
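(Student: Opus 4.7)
The plan is to write $f$ as the difference of two continuous functions on $\Omega$: the projection $\pi_\lambda\colon(s,\lambda)\mapsto\lambda$, which is trivially continuous, and the composition $(s,\lambda)\mapsto s\mapsto d_\B(M^s,N^s)$, which is the composition of the continuous projection $\pi_\omega\colon\Omega\to\omega$ with the map $s\mapsto d_\B(M^s,N^s)$.

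The only nontrivial point is thus the continuity of $s\mapsto d_\B(M^s,N^s)$ as a function on $\omega$. This was already invoked in the proof of \cref{Thm:runtime_leq_lambda}, where it is attributed to \cite[Lemma 3]{klo-exact}. So the proof is essentially a one-line citation: invoke that lemma, precompose with $\pi_\omega$, and subtract from $\pi_\lambda$.

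I do not expect any genuine obstacle here; the only thing to double-check is that the domain $\Omega=\omega\times[0,\infty)$ gives no issues at the boundary $\lambda=0$, but this is immediate since $\pi_\lambda$ and $\pi_\omega$ are continuous on all of $\R^3$, and the continuity statement from \cite{klo-exact} is on all of $\omega$. Hence $f$ is continuous on $\Omega$.
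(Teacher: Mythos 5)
Your proof is correct and takes essentially the same route as the paper: both reduce the claim to the continuity of $s\mapsto d_\B(M^s,N^s)$, cited from \cite[Lemma 3]{klo-exact}, and then observe that $f$ is built from this and the projections by continuous operations. Your version just spells out the composition/subtraction step that the paper leaves implicit.
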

\begin{proof}
As we have already noted, \cite[Lemma 3]{klo-exact} shows that the function sending a slice $s$ to $d_\B(M^s,N^s)$ is continuous. It follows that $f$ is continuous.
\end{proof}

\begin{lemma}
\label{Lem:zeros_are_in_UT}
Let $(s,\lambda)\in \Omega$. If $f(s,\lambda)=0$, then $(s,\lambda)\in \UT$.
\end{lemma}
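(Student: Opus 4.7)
My plan is to argue by contradiction. Suppose $(s,\lambda)\in\Omega$ with $f(s,\lambda)=0$, i.e., $d_\B(M^s,N^s)=\lambda$, but $(s,\lambda)\notin\UT$. Then $(s,\lambda)$ lies in some open cell $C$ of $\T$. Note that we automatically have $\lambda>0$, since taking $h=h'$ in the definition of $\Hh$ shows $\R^2\times\{0\}\in\Hh\subset\T$; similarly $s$ is in the interior of $\omega$ because $S_0,S_1\in\Ss$. Hence I can pick $\epsilon>0$ small enough that the vertical segment from $(s,\lambda-\epsilon)$ to $(s,\lambda)$ stays entirely inside $C$.

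The heart of the proof is the following three-dimensional analogue of \cref{Lem:equiv_h_h'}: within a single open cell of $\T$, the barcode pairings $B\subset\G\times\Rel$ and $B'\subset\G'\times\Rel'$ at $s$ can be chosen once and for all, and the edge-indicator predicates $d^s(h,h')\leq\lambda$ and $d^s(h,h')\leq 2\lambda$ are constant for every $h,h'\in\mathfrak G$. For the first point I use that every line in $\Ll$ arises as the horizontal slice of some $P_{h,h',0}\in\Pp\subset\T$ (taking $i=0$ in the defining formula), so traveling inside $C$ keeps $s$ in a fixed open region of $\Ll$, and \cref{Lem:barcodes_in_arrangement}\,(i) then supplies a common pair of barcode pairings. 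For the second point, the predicate can only switch on the level set $\{d^-(h,h')=i\lambda\}$ for $i\in\{1,2\}$, so I need to check that each such level set lies in $\UT$. The plan is to rerun the case analysis from the proof of \cref{Lem:equiv_h_h'}, now treating $\lambda$ as a free variable: the "both grades above or on $s^*$" case yields the horizontal plane $\R^2\times\{|h_y-h'_y|/i\}\in\Hh$; the "both grades below or on $s^*$" case forces $s_x\cdot|h_x-h'_x|=i\lambda$, i.e., $(s,\lambda)\in S_{h,h',i}\in\Ss$; and the "grades on opposite sides" case places $s^*$ through one of the points $(h'_x,h_y\pm i\lambda)$ or $(h_x,h'_y\pm i\lambda)$, which is exactly membership in one of $P_{h,h',\pm i}$ or $P_{h',h,\pm i}\in\Pp$.

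Granted this, the graphs $G^{s,\lambda}_{B,B'}$ and $G^{s,\lambda-\epsilon}_{B,B'}$ are literally equal, since their edge sets are determined by the predicates above. Hence a perfect matching in one gives a perfect matching in the other, and \cref{Lem:perfect_match_iff_leq_lambda} upgrades $d_\B(M^s,N^s)\leq\lambda$ to $d_\B(M^s,N^s)\leq\lambda-\epsilon$, contradicting the assumption $d_\B(M^s,N^s)=\lambda$. I expect the main obstacle to be executing the case analysis cleanly for both $i=1$ and $i=2$ simultaneously, and in particular verifying that no degenerate configuration (e.g., $h_x=h'_x$ making $S_{h,h',i}$ undefined) forces me to leave $\UT$; in such degeneracies the relevant predicate reduces to one of the other cases (a horizontal plane in $\Hh$ or a plane in $\Pp$), so the conclusion still holds.
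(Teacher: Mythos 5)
Your proposal is correct and relies on the same key ingredient as the paper, namely the case analysis of the zero set $\{d^{s}(h,h')=i\lambda\}$ from the proof of \cref{Lem:equiv_h_h'}, extended to the three-dimensional arrangement $\T$. The only difference is packaging: the paper argues directly that $f(s,\lambda)=0$ forces a marginal edge and hence $d^s(h,h')\in\{\lambda,2\lambda\}$, while you argue by contradiction via constancy of the graph along a short vertical segment inside an open cell (which also makes your appeal to $\Ll$ and \cref{Lem:barcodes_in_arrangement}\,(i) superfluous, since $s$ is fixed along that segment).
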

\begin{proof}
If $f(s,\lambda)=0$, there is an edge in $G_{B,B'}^{s,\lambda}$ that is not in $G_{B,B'}^{s,\lambda'}$ for any $\lambda'<\lambda$ (where $B$ and $B'$ are any barcode pairings of $M^s$ and $N^s$). By the definition of the graphs, this means that there are $h,h'\in \mathfrak G$ such that $d^s(h,h')\in\{\lambda,2\lambda\}$.
We showed in the proof of \cref{Lem:equiv_h_h'} that this implies that either $s\in \ell$ for some $\ell\in \T_\lambda$, or $|h_y-h'_y|\in\{\lambda,2\lambda\}$. In the first case, $s\in \bigcup(\Pp\cup \Ss)$, and in the second, $s\in \bigcup\Hh$.
\end{proof}
\begin{corollary}
	\label{Cor:f_pos_neg}
Suppose $C\subset \Omega$ is an open cell. Either $f(s,\lambda)> 0$ for all $(s,\lambda)\in C$, or $f(s,\lambda)< 0$ for all $(s,\lambda)\in C$.
\end{corollary}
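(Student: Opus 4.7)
The plan is to argue by connectedness together with the intermediate value theorem, which makes the corollary essentially a two-line consequence of the preceding preparation. The key observation is that an open cell $C$ of $\T$ is by definition a connected component of $\Omega \setminus \bigcup\T$, and in particular $C$ is a connected subset of $\Omega$ on which $f$ is continuous.

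First I would note that $f$ does not vanish anywhere on $C$: if there were $(s,\lambda) \in C$ with $f(s,\lambda) = 0$, then \cref{Lem:zeros_are_in_UT} would place $(s,\lambda)$ in $\bigcup\T$, contradicting $C \cap \bigcup\T = \emptyset$. Next, I would invoke the intermediate value theorem (applied along paths in $C$, using that $C$ is path-connected as it is an open connected subset of $\R^3$): if $f$ took both a strictly positive and a strictly negative value on $C$, then by continuity it would have to take the value $0$ somewhere on a connecting path, contradicting the previous sentence. Hence $f$ is either strictly positive throughout $C$ or strictly negative throughout $C$, which is exactly the statement of the corollary.

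There is no real obstacle here; the work has been done in establishing continuity of $f$ and in \cref{Lem:zeros_are_in_UT}. The only thing one needs to be a little careful about is that $C$ is genuinely connected (so that IVT applies), but this is immediate from the definition of an open cell as a connected component of the complement of $\bigcup\T$ in $\Omega$.
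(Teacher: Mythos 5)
Your argument is correct and is essentially identical to the paper's: both use continuity of $f$, the nonvanishing of $f$ on $C$ supplied by \cref{Lem:zeros_are_in_UT}, and connectedness of $C$ to conclude that $f$ has constant sign. The only cosmetic difference is that you phrase the last step via path-connectedness and the intermediate value theorem, whereas the paper simply observes that $f(C)$ is a connected subset of $\R$ avoiding $0$.
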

\begin{proof}
Since $C$ is connected and $f$ is continuous, $f(C)$ is connected. By \cref{Lem:zeros_are_in_UT}, $0\notin f(C)$, so either $f(C) \subset (0,\infty)$ or $f(C) \subset (-\infty,0)$.
\end{proof}
By definition,
\begin{align*}
d_\M &= \sup_{\text{slice } s}d_\B(M^s,N^s)\\
&=\sup\{\lambda \mid \exists s \text{ such that } f(s,\lambda)\leq 0 \}\\
&=\sup\{\lambda \mid \exists s \text{ such that } f(s,\lambda)< 0 \}\\
&=\sup\{\lambda \mid \exists s \text{ such that } (s,\lambda)\in f^{-1}(-\infty,0) \}.
\end{align*}
Since $d_\M>0$ by assumption, there exists an $s$ such that $d_\B(M^s,N^s)>0$, so none of the sets we are taking the supremum over is empty. Since $\Omega \setminus \UT$ (i.e., the union of all open cells in $\Omega$) is dense in $\Omega$ and $f^{-1}(-\infty,0)$ is open, this is equivalent to
\[d_\M = \sup\{\lambda \mid \exists s \text{ such that } (s,\lambda)\in f^{-1}(-\infty,0)\cap (\Omega \setminus \UT) \}.\]
By \cref{Cor:f_pos_neg}, it follows that
\begin{equation}
\label{Eq:dM_max_cells}
d_\M \in \left\{\sup_{(s,\lambda)\in C}\lambda\mid C \text{ open cell in }\Omega\right\} \subset \left\{\sup_{(s,\lambda)\in C}\lambda\mid C \text{ open cell}\right\},
\end{equation}
where we now assume that $s$ is a point in $\R^2$ that is not necessarily in $\omega$. Call a cell \emph{small} if its projection onto $\R$ is bounded above. Since we have assumed $d_\M\neq \infty$, we can restrict ourselves to small cells:
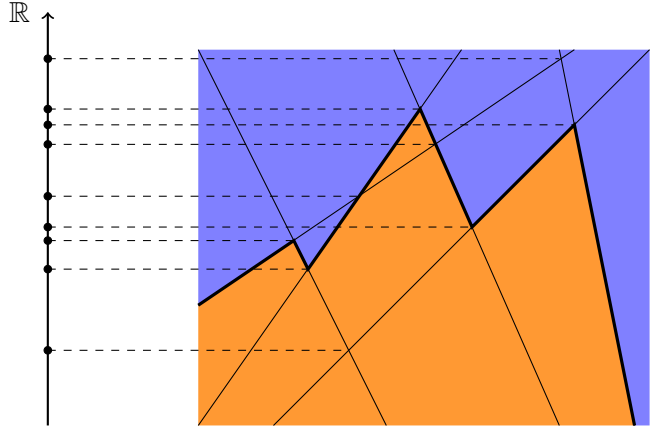
\begin{figure}
\centering
\begin{tikzpicture}
%Real line on the left
\draw[->,thick] (-2,-1) to (-2,4.5);
\node[left] at (-2.1,4.5){$\R$};
%Colors
\fill[orange, opacity=0.8]
(0,.6) to (1.27,1.46) to (1.46,1.08) to (2.95,3.21) to (3.64,1.64) to (5,3) to (5.8,-1) to (0,-1);
\fill[blue, opacity=0.5]
(0,.6) to (1.27,1.46) to (1.46,1.08) to (2.95,3.21) to (3.64,1.64) to (5,3) to (5.8,-1) to (6,-1) to (6,4) to (0,4);
%Dividing line
\draw[very thick] (0,.6) to (1.27,1.46) to (1.46,1.08) to (2.95,3.21) to (3.64,1.64) to (5,3) to (5.8,-1);
%The lines
\draw (0,-1) to (3.5,4);
\draw (2.5,-1) to (0,4);
\draw (1,-1) to (6,4);
\draw (4.8,-1) to (2.6,4);
\draw (5.8,-1) to (4.8,4);
\draw (0,.6) to (5,4);
%Levels
\draw[dashed] (-2,1.46) to (1.27,1.46);
\draw[fill=black] (-2,1.46) circle (.05);
\draw[dashed] (-2,1.08) to (1.46,1.08);
\draw[fill=black] (-2,1.08) circle (.05);
\draw[dashed] (-2,3.21) to (2.95,3.21);
\draw[fill=black] (-2,3.21) circle (.05);
\draw[dashed] (-2,1.64) to (3.64,1.64);
\draw[fill=black] (-2,1.64) circle (.05);
\draw[dashed] (-2,3) to (5,3);
\draw[fill=black] (-2,3) circle (.05);
\draw[dashed] (-2,3.88) to (4.8,3.88);
\draw[fill=black] (-2,3.88) circle (.05);
\draw[dashed] (-2,2.74) to (3.15,2.74);
\draw[fill=black] (-2,2.74) circle (.05);
\draw[dashed] (-2,2.05) to (2.14,2.05);
\draw[fill=black] (-2,2.05) circle (.05);
\draw[dashed] (-2,0) to (2,0);
\draw[fill=black] (-2,0) circle (.05);
\end{tikzpicture}
\caption{For ease of visualization, we illustrate $\T_\lambda$ as a line arrangement in $\R^2$ instead of as a plane arrangement in $\R^3$. The function $f$ is negative in the orange cells and nonnegative in the blue cells. The levels of the vertices are shown on the real line on the left. These are the potential values for $d_\M$, which is the supremum of the projection of the orange region to the real line. In $\T_\lambda$, the vertices are triple intersections of planes instead of (double) intersections of lines.}
\label{Fig:vertices_and_sign_of_f}
\end{figure}
\begin{equation}
\label{Eq:dM_max_cells2}
d_\M \in \left\{\sup_{(s,\lambda)\in C}\lambda\mid C \text{ small open cell}\right\} = \left\{\max_{(s,\lambda)\in \overline C}\lambda\mid C \text{ small open cell}\right\}.
\end{equation}
($\overline C$ denotes the closure of $C$.) Each value $\max_{(s,\lambda)\in \overline C}\lambda$ must be attained at a point that is the unique intersection point between three planes.\footnote{This requires that not all the planes in $\T$ are parallel to a single line. Since $M$ and $N$ are not both the zero module, $\mathfrak G$ contains a generator $g$. Then there are three planes $P_{g,g,1}$, $S_0$ and $\R^2\times \{0\}$ in $\T$ that are not all parallel to one line.} Let the \emph{level} of a point in $\R^3$ be its third coordinate. That is, the level of $(a,b,\lambda)$ is $\lambda$. By a \emph{vertex of $\T$}, we mean a point $p$ such that there exist $P,P',P''\in\T$ with $P\cap P' \cap P''=\{p\}$. See \cref{Fig:vertices_and_sign_of_f}. We sometimes abuse terminology and call $\{p\}$ a vertex when we mean that $p$ is a vertex.

\paragraph{Searching through the vertices to find $d_\M$}

Recall that we assume that $(T(n),M(n))$ are upper bounds on the time and memory needed to decide $d_\M\leq \lambda$ for a given $\lambda$, and that we proved that we can take $T(n)$ to be $\bigO(n^5\log n)$ and $M(n)$ to be $\bigO(n^2)$ in the previous section. To find $d_\M$, we could do the following: Compute the intersections of all triples of planes in $\T$ (for the triples whose intersection is a single point), sort the levels of these points, and find $d_\M$ by binary search. This would give us a runtime of $\bigO(n^6\log n + T(n)\log n)=\bigO(n^6\log n)$, as the most expensive operation is to sort the $\bigO(n^6)$ triple intersections. Also, this approach would require $\bigO(n^6)$ memory to store and sort all intersections.

However, we want to do better and describe an algorithm that runs in expected time
\[\bigO((n^4 + T(n))\log^2 n).\]
and with $\bigO(n^2+M(n))$ memory.
Roughly, our idea is to start with an interval $(a,b]$ that contains $d_\M$ and run through the planes in $\T$ in random order, narrowing the interval as we go. After having dealt with a plane $P\in \T$, we want the updated $a$ and $b$ to be such that the interval $(a,b)$ does not contain the level of any vertex in $P$.
In the end, we will be left with an interval $(a,b]$ such that $b$ is the only point in the interval that is the level of a vertex, so we can conclude that $d_\M=b$. By a randomization argument, we will prove that the expected number of times we have to update $a$ and $b$ is $\bigO(\log n)$.
Thus, since $|\T|=\bigO(n^2)$, it is sufficient to prove that we can check if we have to update $a$ and $b$ for a given plane in $\bigO(n^2\log n)$ (also $\bigO(n^2\log^2 n)$ would be good enough), and that we do not spend more time than $\bigO((n^4+T(n))\log n)$ every time we update $a$ and $b$.
\begin{definition}
\label{Def:I_P}
For a plane $P\in\T$, let $c_1<\dots<c_K$ be the levels of the vertices of $\T$ in $P$. Let $c_{K+1}=\infty$, and if $c_1>0$, let $c_0=0$. Let $I_P=(\alpha_P,\beta_P]$ be the interval of the form $(c_i,c_{i+1}]$ that contains $d_\M$.
\end{definition}
\begin{lemma}
\label{Lem:decide_inclusion}
For any $P\in\T$ and any interval $(\alpha,\beta]$ containing $d_\M$, we can decide if $(\alpha,\beta]\subseteq I_P$ in $\bigO(n^2\log n)$ time and $\bigO(n^2)$ memory.
\end{lemma}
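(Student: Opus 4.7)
The plan is to reformulate the containment $(\alpha,\beta]\subseteq I_P$ as a purely combinatorial condition on vertex levels in $P$ and then detect it via a sort-and-compare on the line arrangement induced on $P$.

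First I will show that $(\alpha,\beta]\subseteq I_P$ holds if and only if no vertex of $\T$ lying on $P$ has level in the open interval $(\alpha,\beta)$. Unfolding $I_P=(\alpha_P,\beta_P]$: $\alpha_P$ is the greatest vertex level on $P$ strictly below $d_\M$ and $\beta_P$ the least vertex level on $P$ that is $\geq d_\M$. Since $d_\M\in(\alpha,\beta]\cap I_P$, the containment is equivalent to $\alpha\geq\alpha_P$ and $\beta\leq\beta_P$. If either inequality fails, then $\alpha_P$ or $\beta_P$ itself lies in $(\alpha,\beta)$; conversely, any vertex level $c$ of $P$ in $(\alpha,\beta)$ forces $\alpha<\alpha_P$ (if $c<d_\M$) or $\beta_P<\beta$ (if $c\geq d_\M$).

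Given this reformulation, the task reduces to deciding whether any two of the at most $\bigO(n^2)$ lines $\ell_{P'}:=P\cap P'$ (for $P'\in\T\setminus\{P\}$) meet at a point whose $\lambda$-coordinate is in $(\alpha,\beta)$. Assuming $P$ is not horizontal, $L_\alpha:=P\cap\{\lambda=\alpha\}$ and $L_\beta:=P\cap\{\lambda=\beta\}$ are parallel lines in $P$ bounding an open planar strip, and every $\ell_{P'}$ not parallel to them crosses each of $L_\alpha,L_\beta$ in exactly one point. Two such lines intersect inside the strip if and only if their orderings along $L_\alpha$ and $L_\beta$ disagree. The algorithm is therefore: compute all $\bigO(n^2)$ intersection points on $L_\alpha$ and $L_\beta$, sort them along $L_\alpha$ in $\bigO(n^2\log n)$, and verify in linear time whether they appear in the same order along $L_\beta$.

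Two degenerate situations need brief treatment. If some $\ell_{P'}$ is parallel to $L_\alpha$, then $\ell_{P'}$ sits at a constant level $\lambda_0$ on $P$; if $\lambda_0\in(\alpha,\beta)$ then it is crossed inside the strip by every non-parallel line and the answer is ``no'', all checkable in $\bigO(n^2)$. If $P$ is itself horizontal at level $\lambda_0$, every vertex on $P$ has level $\lambda_0$, so the answer is decided in constant time by testing $\lambda_0\in(\alpha,\beta)$. Sorting dominates, yielding the claimed $\bigO(n^2\log n)$ bound. The main subtlety is obtaining the clean characterization of the first step; once that is in place, the geometric work is classical inversion detection inside a strip of a planar line arrangement.
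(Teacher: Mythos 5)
Your proposal is correct and follows essentially the same route as the paper: reformulate $(\alpha,\beta]\subseteq I_P$ as the absence of a vertex of $\T$ on $P$ at a level strictly between $\alpha$ and $\beta$, separate out the constant-level lines (the paper's ``straight'' planes) and the horizontal-$P$ case, and detect a crossing in the open strip by sorting the remaining (``slanted'') lines along $P_\alpha = P\cap(\R^2\times\{\alpha\})$ and comparing the order along $P_\beta = P\cap(\R^2\times\{\beta\})$. The one detail your sketch glosses over and the paper treats explicitly is the resolution of ties when several lines cross $P_\alpha$ or $P_\beta$ at the same point: the paper sorts by $\leq_\alpha$ using $\leq_\beta$ and a fixed arbitrary total order as tiebreakers (and symmetrically for $\leq_\beta$), so that a coincidence on a boundary line is never mistakenly reported as an inversion inside the open strip.
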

The idea of the proof is illustrated in \cref{Fig:P_alpha_beta}: $\alpha$
and $\beta$ induce two parallel lines on $P$, and the planes in $T$ not parallel to $P$ induce a line arrangement on $P$. Now, $(\alpha,\beta]\subseteq I_P$
if and only if this line arrangement does not have an intersection point
in the strip between $\alpha$ and $\beta$. This in turn is equivalent
to the order of these arrangement lines along the $\alpha$ line
and along the $\beta$-line being the same, which can be checked
in $\bigO(n^2\log n)$ time and $\bigO(n^2)$ memory by sorting. The subsequent proof contains
more details:
\begin{proof}
If $P$ is of the form $\R^2\times\{\lambda\}$, then all the vertices in $P$ are at level $\lambda$. Thus, $I_P$ is either $(0,\lambda]$ or $(\lambda,\infty]$, so $(\alpha,\beta]\subseteq I_P$ holds if and only if $\lambda\notin (\alpha,\beta)$, which we can check in constant time. (The intersection $(\alpha,\beta]\cap I_P$ cannot be empty, as both intervals contain $d_\M$.)

Assume $P$ is not of the form $\R^2\times\{\lambda\}$. Then $P$ intersects the planes $\R^2\times\{\alpha\}$ and $\R^2\times\{\beta\}$ in lines $P_\alpha$ and $P_\beta$, respectively. Let $P_{(\alpha,\beta)}$ be the subset of $P$ that lies strictly between $P_\alpha$ and $P_\beta$. We have $(\alpha,\beta]\subseteq I_P$ if and only if there are no vertices in $P_{(\alpha,\beta)}$.

Call a plane $P'\in \T$ \emph{straight} if $P\cap P'$ is a line contained in a plane of the form $\R^2\times\{\lambda\}$. That is, all the points in $P\cap P'$ are at the same level. Call a plane in $\T$ \emph{slanted} if it is neither straight nor parallel to $P$.

We first find all the straight planes in $\T$ and check if the level of the intersection of each plane with $P$ (i.e., the level of any point in this intersection) is strictly between $\alpha$ and $\beta$. We can do this in $\bigO(n^2)$ time and memory. If we find one such plane $P'$, then all the vertices in $P\cap P'$ are at a level between $\alpha$ and $\beta$, and we conclude that $(\alpha,\beta]\nsubseteq I_P$. Otherwise, we know that no straight plane contains a vertex in $P_{(\alpha,\beta)}$, so we can ignore the straight planes.

Assuming that the straight planes can be safely disregarded, the only vertices we have left to check are those of the form $P\cap P'\cap P''$ where $P'$ and $P''$ are both slanted. If $P'$ is slanted, $P'\cap P_\alpha$ and $P'\cap P_\beta$ both contain exactly one point. This is because the intersection $P\cap P'$ is a line, as $P$ and $P'$ are not parallel, and the line contains exactly one point at every level, as the line does not lie at a fixed level. In $\bigO(n^2)$ time and memory, we can compute $P'\cap P_\alpha$ and $P'\cap P_\beta$ for all slanted $P'$. 
The set of slanted planes can be sorted by when they intersect $P_\alpha$ as we traverse $P_\alpha$ in one direction. This gives a total preorder $\leq_\alpha$ on the set of slanted planes with $P'\leq_\alpha P''$ if $P'$ intersects $P_\alpha$ before or at the same point as $P''$. This is not necessarily a total order, as several planes might intersect $P_\alpha$ in the same point. Define a second total preorder $\leq_\beta$ by doing the same for $P_\beta$, traversing it in the same direction as $P_\alpha$. See \cref{Fig:P_alpha_beta}.
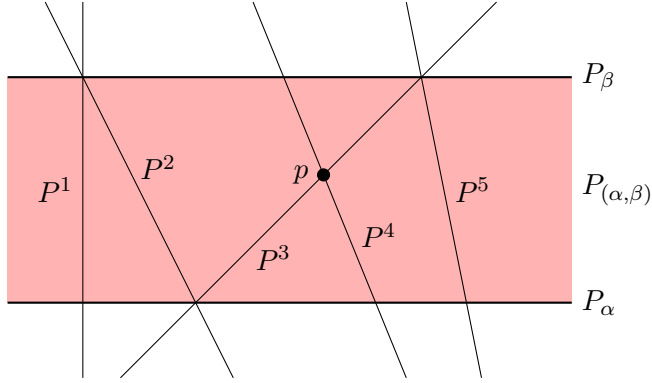
\begin{figure}
\centering
\begin{tikzpicture}
%Horizontal lines
\fill[red, opacity=0.3]
(-.5,0) to (7,0) to (7,3) to (-.5,3);
\draw[thick] (-.5,0) to (7,0);
\node[right] at (7,0){$P_\alpha$};
\draw[thick] (-.5,3) to (7,3);
\node[right] at (7,3){$P_\beta$};
\node[right] at (7,1.5){$P_{(\alpha,\beta)}$};
%Other lines
\draw (.5,-1) to (.5,4);
\node[left] at (.5,1.5){$P^1$};
\draw (2.5,-1) to (0,4);
\node at (1.5,1.8){$P^2$};
\draw (1,-1) to (6,4);
\node[left] at (3.4,.6){$P^3$};
\draw[color=black,fill=black] (3.7,1.7) circle (.08);
\node at (3.4,1.7){$p$};
\draw (4.8,-1) to (2.763,4);
\node[left] at (4.8,.9){$P^4$};
%\draw (2,-1) to (2,4);
\draw (5.8,-1) to (4.8,4);
\node[left] at (6.05,1.5){$P^5$};
\end{tikzpicture}
\caption{Illustration of $P$ with $P_\alpha$ and $P_\beta$ as well as the intersection with slanted planes $P^1,\dots,P^5$. $P_{(\alpha,\beta)}$ is the open region in pink. We have $P^1<_\alpha P^2 =_\alpha P^3 <_\alpha P^4 <_\alpha P^5$ and $P^1=_\beta P^2 <_\beta P^4 <_\beta P^3 =_\beta P^5$. The only $i$ and $j$ for which both $P^i <_\alpha P^j$ and $P^j <_\beta P^i$ hold are $i=3$ and $j=4$, and this corresponds to the only vertex in $P_{(\alpha,\beta)}$ arising from the planes we have drawn, namely $p\in P\cap P^3 \cap P^4$.}
\label{Fig:P_alpha_beta}
\end{figure}

$P\cap P'\cap P''$ is a vertex in $P_{(\alpha,\beta)}$ if and only if 
$P'<_\alpha P''$ and $P''<_\beta P'$, or vice versa.
Whether $P'$ and $P''$ satisfying these inequalities exist can be checked in $\bigO(n^2\log n)$: Fix an arbitrary total order $<$ of the slanted planes. List the (at most $n^2$) slanted planes in increasing order by $\leq_\alpha$, using $\leq_\beta$ as a tiebreaker if $P' =_\alpha P''$ and $<$ as a tiebreaker if in addition $P' =_\beta P''$ (in this case, $P\cap P'\cap P''$ is a line) in $\bigO(n^2\log n)$ time. This gives a total order on the set of slanted planes.
Then list the slanted planes by the same method with $\leq_\alpha$ and $\leq_\beta$ switching roles. There are $P'$ and $P''$ such that $P'<_\alpha P''$ and $P''<_\beta P'$ if and only if these two lists are not identical:
If $P'<_\alpha P''$ and $P''<_\beta P'$, then $P'$ and $P''$ are in opposite orders in the two lists, and if $P'\leq_\alpha P''$ and $P'\leq_\beta P''$, then they are in the same order. We can check if the lists are identical by running through them simultaneously in $\bigO(n^2)$ time and memory.
\end{proof}
\begin{lemma}
\label{Lem:compute_I_p}
If $(T(n),M(n))$ are the time- and space-complexity of an algorithm to decide if $d_\M\leq \lambda$ for a given $\lambda$, then for any $P\in\T$, we can compute $I_P$ in $\bigO((n^4+T(n))\log n)$
time and $\bigO(n^2+M(n))$ memory.
\end{lemma}
\begin{proof}
We start with a simplified version of the proof that shows the correct running time but uses too much memory:
For each pair $P'$, $P''$ of planes in $\T$, we can compute $P\cap P'\cap P''$ in constant time. If this is a single point, it is a vertex. In $\bigO(n^4\log n)$ time, we can find all the vertices of $\T$ in $P$ by running through all pairs of planes in $\T$, list their levels and sort them in increasing order. Adding $c_{K+1}=\infty$ and possibly $c_0=0$, we get a list $(c_0<)c_1<\dots<c_{K+1}$ as in \cref{Def:I_P}. Using binary search, we can find $c_i$ and $c_{i+1}$ such that $c_i<d_\M\leq c_{i+1}$ in $\bigO(T(n)\log n)$ time.
By definition, $I_P=(c_i,c_{i+1}]$.

The problem with the above argument is that it requires $\bigO(n^4)$ space to store the levels of all intersections.
To avoid that, observe that instead of doing binary search with the median element of the remaining search range, it suffices
to identify an \emph{approximate median} $x$, that is, an element such that if $k$ elements are in the search range,
at least $k/8$ elements are smaller than $x$ and at least $k/8$ elements are larger than $x$. Indeed, performing the
binary search with such approximate medians results in $\bigO(\log n)$ iterations, with a slightly larger constant
hidden in the $\bigO$ notation. We now argue that we can identify an approximate median for any given range in $\bigO(n^4)$
time and $\bigO(n^2)$ memory. With that, the theorem follows.

The idea is reminiscent of the famous ``median of medians'' algorithm to compute medians in linear time~\cite{blum73}; see also \cite[Sec.~9.3]{cormen}.
Let $I$ denote an interval that contains $d_\M$.
We maintain a container of size $n^2$, which we call the \emph{batch container}
and another container of size $n^2$, called the \emph{median container}, both initially empty.
We traverse the $\bigO(n^4)$ pairs of planes as above and compute the level of the intersection points.
If a level does not lie in $I$, we disregard it. Otherwise, we add it to the batch container.
If the batch container is full, we compute its (exact) median (using the algorithm in \cite[Sec.~9.3]{cormen}).
This requires $\bigO(n^2)$ time and space. We add this median to the median container and clear the batch container.
After the last intersection is traversed, we also compute the median of the remaining elements in the batch container
and add it to the median container. Then, we compute the median of the median container, again in $\bigO(n^2)$
time and space, and return the median $x$ as the result.

It is clear that the algorithm runs in $\bigO(n^4)$ time and uses $\bigO(n^2)$ memory. To see that the outcome $x$ is
indeed an approximate median, assume for simplicity that the number $k$ of elements in $I$ is a multiple of $n^2$, say $k=mn^2$
and assume $m=2\ell+1$ is odd. Then, there are $\ell$ medians smaller than $x$, and each such medians was selected
as median of $n^2$ values, so there are $n^2/2$ elements in its batch that are smaller and hence, also smaller than $x$.
Hence, there are $\ell n^2/2\geq k/4$ elements smaller than $x$, and the symmetric argument holds for elements larger than $x$.
Slightly more care is required if the last batch has fewer than $n^2$ elements, but a careful analysis shows
that bound of $k/8$ if $k$ is larger than a sufficiently large constant. We omit further details.
\end{proof}
\begin{theorem}
\label{Thm:runtime_dep_on_T(n)}
If $(T(n),M(n))$ be the time- and space-complexity of an algorithm for deciding if $d_\M\leq \lambda$ for a given $\lambda$, then we can compute $d_\M$ in expected time $\bigO((n^4+T(n))\log^2 n)$
and with $\bigO(n^2+M(n))$ memory.
\end{theorem}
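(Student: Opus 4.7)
The plan is to implement exactly the randomized incremental scheme sketched in the overview: maintain an interval $(\alpha,\beta]$ that provably contains $d_\M$, iterate through the planes of $\T$ in a uniformly random order, and for each plane $P$ first \emph{test} whether the current interval is already contained in $I_P$ and only \emph{recompute} $I_P$ when it is not. Concretely, initialize $(\alpha,\beta]=(0,\infty]$ (we have already reduced to $d_\M\in(0,\infty)$). Pick a uniform random permutation $P_1,\dots,P_m$ of $\T$, where $m=|\T|=O(n^2)$. For $k=1,\dots,m$, use \cref{Lem:decide_inclusion} to decide in $O(n^2\log n)$ whether $(\alpha,\beta]\subseteq I_{P_k}$; if yes, leave the interval unchanged, and if no, invoke \cref{Lem:compute_I_p} to compute $I_{P_k}$ in $O((n^4+T(n))\log n)$ and replace $(\alpha,\beta]$ by $(\alpha,\beta]\cap I_{P_k}$ (which still contains $d_\M$ since both intervals do).

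For correctness, note that once the loop terminates we have $(\alpha,\beta]\subseteq I_P$ for every $P\in\T$, so the open interval $(\alpha,\beta)$ contains no level of any vertex of $\T$. Since $d_\M\in(\alpha,\beta]$ and, by \eqref{Eq:dM_max_cells2} together with the triple-intersection observation, $d_\M$ must equal the level of some vertex, we conclude $d_\M=\beta$, which we return.

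For the complexity we analyze updates and non-updates separately. The $O(n^2)$ inclusion tests cost $O(n^4\log n)$ in total by \cref{Lem:decide_inclusion}. The remaining work is $O((n^4+T(n))\log n)$ times the (random) number of updates, so it suffices to bound the expected number of updates by $O(\log n)$. The key observation, and I expect this to be the only subtle step, is a standard backwards-analysis argument: after processing the first $k$ planes (in the random order), the current interval $(\alpha_k,\beta_k]$ coincides with the maximal subinterval of $(0,\infty]$ containing $d_\M$ that avoids the levels of all vertices lying on at least one of $P_1,\dots,P_k$; in particular each of its two endpoints is either an initial boundary value or is ``witnessed'' by one specific plane among $P_1,\dots,P_k$. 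Hence at most two of the $k$ planes are responsible for the endpoints, and since the permutation is uniformly random, conditional on the unordered set $\{P_1,\dots,P_k\}$ each of these planes is equally likely to be $P_k$. Thus the probability that step $k$ triggers an update is at most $2/k$, and the expected number of updates is $\sum_{k=1}^{m}2/k=O(\log m)=O(\log n)$.

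Combining, the expected running time is
\[
O(n^2)\cdot O(n^2\log n)\;+\;O(\log n)\cdot O((n^4+T(n))\log n)\;=\;O\bigl((n^4+T(n))\log^2 n\bigr),
\]
as claimed. The only real obstacle is justifying cleanly that the endpoints of $(\alpha_k,\beta_k]$ are determined by at most two of the already-processed planes (so that backwards analysis yields the $2/k$ bound); everything else is a direct assembly of \cref{Lem:decide_inclusion} and \cref{Lem:compute_I_p} with the vertex-of-$\T$ characterization of $d_\M$.
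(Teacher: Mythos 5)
Your proposal is correct and follows essentially the same algorithm and analysis as the paper: maintain the intersection $\bigcap I_{P_j}$ incrementally in a random order, decide inclusion cheaply via \cref{Lem:decide_inclusion}, recompute $I_P$ via \cref{Lem:compute_I_p} only when needed, and conclude $d_\M=\beta$ because $(\alpha,\beta)$ then contains no level of a vertex. The only cosmetic difference is that you phrase the $\le 2/k$ update bound via backwards analysis (the current endpoints are each uniquely witnessed, if at all, by one already-processed plane), whereas the paper argues directly that an update at step $i$ forces $\alpha_{P_i}$ to be the strict maximum or $\beta_{P_i}$ the strict minimum among the first $i$ planes; these are equivalent and yield the same bound.
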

\begin{proof}
We want to compute $(\alpha,\beta]=\bigcap_{P\in \T}I_P$: Since all $I_P=(\alpha_P,\beta_P]$ contain $d_\M$, $(\alpha,\beta]$ contains $d_\M$. Since $(\alpha_P,\beta_P)$ does not contain the level of any vertex in $P$, $(\alpha,\beta)$ does not contain the level of any vertex and thus does not contain $d_\M$. We conclude that $d_\M=\beta$.

We iterate through all planes in $\T$ in a random order $P_1, P_2,\dots, P_{|\T|}$. We first compute $I_{P_1}$ and let $I_1=I_{P_1}$. For all subsequent $P_i$, we assume $I_{i-1} = \bigcap_{j=1}^{j=i-1}I_{P_j}$ and check if $I_{i-1}\subseteq I_{P_i}$. If the answer is yes, we let $I_i=I_{i-1}$ and move on to the next iteration. If the answer is no, we compute $I_{P_i}$ and let $I_i=I_{i-1}\cap I_{P_i}$. In both cases, $I_i = \bigcap_{j=1}^{j=i}I_{P_j}$, so it is clear that $I_{|\T|}=\bigcap_{P\in \T}I_P$.

Let $N$ be the number of $i$ for which $I_{i-1}\nsubseteq I_{P_i}$. By \cref{Lem:decide_inclusion} and \cref{Lem:compute_I_p}, the runtime of this algorithm is
\begin{equation}
\label{Eq:runtime_N}
\bigO(n^2\cdot n^2\log n+N(n^4+T(n))\log n).
\end{equation}
and it uses $\bigO(n^2+M(n))$ memory.
To find the expected runtime, we need to estimate $N$. For $I_{i-1}=\bigcap_{j=1}^{j=i-1}I_{P_j}\nsubseteq I_{P_i}$ to hold, we must either have $\alpha_{P_i}>\alpha_{P_j}$ for all $j<i$, or $\beta_{P_i}<\beta_{P_j}$ for all $j<i$. The probability for the first inequality to hold is at most $\frac{1}{i}$, as in the sequence $\alpha_{P_1},\dots,\alpha_{P_i}$, there has to be a unique smallest number, this number has to be placed last, and it is equally likely to be in each of the $i$ positions. A similar argument shows that also the probability of $\beta_{P_i}<\beta_{P_j}$ is at most $\frac{1}{i}$. Putting the two together, the probability of $I_{i-1}\nsubseteq I_{P_i}$ is bounded above by $\frac{2}{i}$. Summing over all $i$ then shows that the expected value of $N$ is $\bigO(\log n)$. We get the expected runtime given in the lemma by putting this into \cref{Eq:runtime_N}.
\end{proof}
\begin{corollary}
We can compute $d_\M$ in expected time $\bigO(n^5\log^3 n)$ time and $\bigO(n^2)$ memory.
\end{corollary}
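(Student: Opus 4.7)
The plan is to simply combine the two main theorems already established in the paper. By \cref{Thm:runtime_leq_lambda}, we can take $T(n)=\bigO(n^5\log n)$ as an upper bound on the complexity of deciding whether $d_\M\leq\lambda$ for a given $\lambda$. Substituting this into the expected-runtime bound $\bigO((n^4+T(n))\log^2 n)$ from \cref{Thm:runtime_dep_on_T(n)} yields
\[
\bigO((n^4+n^5\log n)\log^2 n)=\bigO(n^5\log^3 n),
\]
since the $n^5\log n$ term dominates $n^4$ asymptotically.

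There is essentially no obstacle, as the corollary is just an instantiation of \cref{Thm:runtime_dep_on_T(n)} using the concrete decision-algorithm bound. The only thing to verify is the special cases $d_\M=0$ and $d_\M=\infty$ handled at the start of this section: $d_\M=0$ is detected by a single call to the decision procedure at $\lambda=0$, costing $T(n)=\bigO(n^5\log n)$, and $d_\M=\infty$ is detected in $\bigO(n^3)$ by computing $d_\B(M^s,N^s)$ at a single slice. Both of these are absorbed in $\bigO(n^5\log^3 n)$, so once either case is ruled out we can invoke \cref{Thm:runtime_dep_on_T(n)} on the remaining range $d_\M\in(0,\infty)$ and obtain the stated bound.
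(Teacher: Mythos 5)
Your proposal is correct and follows essentially the same route as the paper: plug the bound $T(n)=\bigO(n^5\log n)$ from \cref{Thm:runtime_leq_lambda} into the $\bigO((n^4+T(n))\log^2 n)$ bound of \cref{Thm:runtime_dep_on_T(n)}. Your extra remark on the $d_\M=0$ and $d_\M=\infty$ preprocessing being absorbed is accurate but not strictly needed, since the paper already accounts for those checks before stating \cref{Thm:runtime_dep_on_T(n)}.
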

\begin{proof}
By \cref{Thm:runtime_leq_lambda}, we can take $T(n)$ to be $\bigO(n^5 \log n)$ and $M(n)=\bigO(n^2)$.
Inserting these $T(n)$ and $M(n)$ in bounds of \cref{Thm:runtime_dep_on_T(n)}, we get the theorem.
\end{proof}

\section{A more space-efficient algorithm.}
\label{sec:space-efficient}
The exact matching distance can also be computed with a linear space bound,
to the price of a much \change{worse} time bound. Since we consider this approach
of mostly theoretical interest and irrelevant in practice, we only sketch the main
ideas for the sake of brevity:

Let us first revisit the decision algorithm for a fixed $\lambda$. \change{Using} a binary counter, we can iterate through all lines of the arrangement 
from Definition~\ref{def:lambda_lines} using only logarithmic space. 
For every line $L$, we can iterate through the $O(n^2)$ intersection points with
all other lines in order without storing all intersections at once: this is done by scanning through all lines
to find the leftmost intersection with $L$, and in later steps by finding the leftmost intersection point that lies right
of the previous intersection encountered, also by a linear scan. That can be done in linear space (but requires quadratic time per point encountered).
For any two consecutive intersection points along $L$, the corresponding line segment is incident to two faces, and by moving
slightly above and below $L$, we can find points in the interior of these faces. We compute the barcodes and
their bottleneck distance at these points; if the result is greater than $\lambda$, we stop. If the algorithm iterates
through all lines of the arrangement, we can be sure that every face has been checked (in fact, every face
has been checked once for every boundary edge). Since the barcode consists of $n$ elements, and computing the bottleneck
distance requires linear space in the size of the barcode, we can state:

\begin{lemma}
Let $(T_{\mathrm{pers}},S_{\mathrm{pers}})$ be the time and space complexity for computing the barcode of a presentation
of a $1$-parameter module of size $n$. Then, we can decide whether $d_\M(M,N)\leq \lambda$
in time $O(n^c + n^d T_{\mathrm{pers}})$ for some constants $c,d$ using $O(n+S_{\mathrm{pers}})$ space.
\end{lemma}
If we compute barcodes via matrix-multiplication, we obtain $T_{\mathrm{pers}}=O(n^\omega)$ and $S_{\mathrm{pers}}=O(n^2)$
which yields a worse time complexity bound than the approach in Section~\ref{Sec:deciding} with no benefit on the memory
consumption. However, an unpublished
barcode algorithm by Ulrich Bauer\footnote{This algorithm is called ``oblivious matrix reduction''~-- see \url{https://ulrich-bauer.org/ripser-talk.pdf}, slides 71--73.} yields $S_{\mathrm{pers}}=O(n)$
to the price that the time complexity is exponential. Still, ignoring the time complexity, instantiating
our decision algorithm with Bauer's barcode algorithm yields a linear space algorithm.

To compute the exact matching distance, we can just enumerate all triples of planes from Definition~\ref{def:plane_arrangement},
again using a counter, and compute for each triple the intersection point. We query the decision algorithm for the $\lambda$-value
of the intersection point, and remember the largest lambda for which the decision algorithm gives a positive answer.
This requires $O(n^6)$ calls of the decision algorithm as opposed to $O(\log n)$ such calls as in our more time-efficient main result. 
However, the space consumption is better:

\begin{theorem}
Let $(T_{\mathrm{pers}},S_{\mathrm{pers}})$ be the time and space complexity for computing the barcode of a presentation
of a $1$-parameter module of size $n$. Then, we can compute $d_\M(M,N)$
in time $O(n^c + n^dT_{\mathrm{pers}}))$ for some constants $c,d$ using $O(n+S_{\mathrm{pers}})$ space.
\end{theorem}

A slightly more careful analysis yields that one can set $c=12$ and $d=10$ in the above theorem. Again, using Bauer's algorithm,
we obtain an algorithm with space complexity $O(n)$.

\section{Conclusion.}
Our approach considerably improves the time and space complexity
of computing the exact matching distance for the case of $2$ parameters.
An improvement in the time complexity would be possible
via a more efficient algorithm for the decision problem in Section~\ref{Sec:deciding}, 
since its time complexity is the dominating factor of the current analysis.
For instance, a $\tilde\bigO(n^4)$ time algorithm for the decision problem
would immediately yield of $\tilde\bigO(n^4)$ (expected) time algorithm for the computation
of the matching distance as well using Theorem~\ref{Thm:runtime_dep_on_T(n)}.
While we demonstrate that the space complexity can be improved in principle,
we speculate that such an improvement will be very difficult without
a serious worsening of the time complexity. It is not even clear to us
whether computing the matching distance can be done in linear space
using a polynomial-time algorithm.
The result outlined in Section~\ref{sec:space-efficient} demonstrates
that this question reduces to computing barcodes in linear space
and polynomial time.

It is natural to ask whether the algorithm has the potential to be 
used in practice, in comparison with approximate methods~\cite{kn-efficient}.
We speculate that a direct implementation of our approach might not
perform well in practice, but that a competitive
exact algorithm is possible based on the ideas. However, substantial
algorithm engineering will be needed. Some ideas, such as adaptive local
computations and the use of minimal presentations~\cite{fkr-compression,lw-computing} were already described in the conclusion of~\cite{klo-exact}.
Moreover, a major source for improvement might be to reduce the size
of the dual line arrangements considered, as not every pair of grades necessarily
needs to be considered. We leave these aspects to further work.

It is also natural to ask whether the matching distance can be computed
exactly for $3$ or more parameters.
Based on informal considerations, for $d$ parameters 
we conjecture that deciding if $d_\M\leq \lambda$ can be done by traversing a hyperplane arrangement of size $O(n^2)$ in $\R^{2d-2}$, 
and that our strategy of lifting to a hyperplane arrangement in $\R^{2d-1}$ to compute $d_\M$ exactly is viable.
We are not sure, however, to what extent our algorithm
and its complexity analysis also carries over to that case.

\end{document}